\numberwithin{equation}{section}
\newtheorem{theorem}{Theorem}[section]
\newtheorem{prop}[theorem]{Proposition}
\newtheorem{lemma}[theorem]{Lemma}
\newtheorem{cor}[theorem]{Corollary}
\newtheorem{defn}[theorem]{Definition}
\newtheorem{remark}[theorem]{Remark}
\newenvironment{rem}{\begin{remark}\rm}{\end{remark}}
\newtheorem{example}[theorem]{Example}
\newtheorem*{spectrumseparationtheorem*}{Spectral Separation Theorem}
\newtheorem*{indexlocalizationtheorem*}{Index Localization Theorem}
\newcounter{nmdthmcnt}
\def\bear{\begin{eqnarray}}
\def\eear{\end{eqnarray}}
\def\a{\alpha}
\def\b{\beta}
\def\phi{\varphi}
\def\Bbb{\mathbb}
\def\Z{{ \Bbb Z}}
\def\R{{ \Bbb R}}
\def\cx{{ \Bbb C}}
\def\cal{\mathcal}
\def\A{{\mathcal A}}
\def\D{{\mathcal D}}
\def\K{{\mathcal K}}
\def\L{{\mathcal L}}
\def\N{\mathcal{N}}
\def\tr{\mbox{tr}}
\def\cok{\mathrm{coker\,}}
\def\ind{\mathrm{index\,}}
\def\End{\mathrm{End}}
\def\Hom{\mathrm{Hom}}
\def\Isom{\mathrm{Isom}}
\def\Spin{\mathrm{Spin}}
\def\span{\mathrm{span}}
\def\Ad{\mathrm{Ad}} 
\def\Sym{\mathrm{Sym}}
\def\ev{\mathrm{ev}}
 \def\del{\overline \partial}
\newcommand{\SM}[1]{%
  \ensuremath{\Bigl(\negthinspace\begin{smallmatrix}#1\end{smallmatrix}\Bigr)}}
\title[Spinor Pairs and the  Concentration Principle ]{Spinor Pairs and the  Concentration Principle for Dirac operators}
\author[M. Maridakis ]{Manousos Maridakis}
\address{Hill Center Department of  Mathematics, Rutgers University}
\email{mmanos@math.rutgers.edu}
\begin{document}

%%%%%%%%%%%%%%%%%%%%%%%%%%%%%%%%%%%%%%%

\vskip.15in

\begin{abstract}
We study perturbed Dirac operators of the form $ D_s= D + s{\cal A} :\Gamma(E)\rightarrow \Gamma(F)$ over a compact Riemannian manifold $(X, g)$ with symbol $c$ and special bundle maps ${\cal A} :  E\rightarrow F$ for $s>>0$. Under a simple algebraic criterion on the pair $(c, {\cal A})$, solutions of $D_s\psi=0$ concentrate as $s\to\infty$ around the singular set $Z_\A\subset X$ of ${\cal A}$.  We give many examples, the most interesting ones arising from a general ``spinor pair'' construction.
 \end{abstract}
\maketitle

 \vspace{5mm}
 %%%%%%%%%%%%%%%%%%%%%%%%%%%%%%%%%%%%%%%%%%%%%%%%%%%%%%%%%%%%
%%%%%%%%%%%%%%%%%  Introduction %%%%%%%%%%%%%%%%%%%%%%%%%%%%%%
%%%%%%%%%%%%%%%%%%%%%%%%%%%%%%%%%%%%%%%%%%%%%%%%%%%%%%%%%%%%%%

\setcounter{equation}{0}
\section{Introduction}
%\label{0}
\bigskip

 Given a first order elliptic operator $D$ with principal symbol $\sigma$, one  can look for zeroth-order perturbations $\A$  such that all  finite-energy solutions of the equation
%\vspace{2mm}
\begin{equation}
\label{m1}
D_s\psi\ =\ (D+s\A)\psi\ =\  0
\end{equation}
%\vspace{-1mm}
increasingly concentrate along submanifolds $Z_\ell$ as $s\to\infty$. There are several examples of this in the literature, the most well-known occurring in Witten's approach to Morse Theory. The aim of this paper is to find a general setting for such concentration phenomenon and construct new examples.

 We start with a simple criterion that insures concentration: $D + s{\cal A}$ localizes if 
\begin{eqnarray}
\label{co}
\A^*\circ \sigma(u) = \sigma(u)^*\circ \A  \qquad \mbox{for every $u\in T^*X$.}
\end{eqnarray}
This  algebraic  condition implies the analytic fact that solutions  concentrate (in the precise sense of Proposition~\ref{concentrationProp}).

 After summarizing known examples, this paper  describes a general method, based on ``spinor pairs'', that yields new examples. In dimensions two and four, this method produces operators that are closely related to the linearized vortex equations on a Riemann surface,  and to the Seiberg-Witten equations on a 4-manifold.   In both cases,  concentration occurs along submanifolds defined by the zeros of a spinor.

 The concentration condition (\ref{co}) was  previously found by I. Prokhorenkov and K.~Richardson \cite{pr}. They classified the complex linear perturbations $\A$ that satisfy \eqref{co} but found few examples, all of which concentrate at points.  Their list of examples does {\em  not}  include most of the examples given here because they assumed that $\A$ is complex-linear, while in many of our examples $\A$ is conjugate-linear or there is no complex structure present.   Thus it is essential to study (\ref{m1}) as a {\em real} operator.

 \medskip

This paper has six sections.  Section \ref{sec2} introduces  the concentration condition  \eqref{co} and describes some analytic consequences.    

Section \ref{sec3} presents some elementary examples,   including one that extends a theorem of Taubes.  These arise as real linear perturbations of reducible Clifford bundles.  Example~1 is  classical, but already illustrates the idea that concentration occurs when a complex operator is perturbed by a conjugate-linear operator $\A$.  In \cite{t1}, C.~Taubes used a concentrating family of operators to give an interesting new proof of the Riemann-Roch Theorem for line bundles on complex curves;  our Example~2 extends this to higher-rank bundles over curves.  Example~3 shows how Witten's Morse Theory fits into the general setup of Section~2.

Sections \ref{sec4}, \ref{sec5} and \ref{sec6} give examples where $D$ acts on ``spinor-form pairs'', meaning sections of a subbundle of $W\oplus \Lambda^*(T^*X)$, where $W$ is a bundle of spinors and $\Lambda^*T^*X$ is the bundle of forms. Concentration occur along the zero set of a spinor field. These examples are most natural in low dimensions, especially in dimension four.  In particular, Examples~5 and 6 are natural perturbations of the linearized Seiberg-Witten equations. Example~6 in Section~5 suggest that there is a localization theorem that generalizes  the index localization Theorem~ \ref{Igor} to cases where $Z_\A$ is a union of submanifolds, rather than isolated points.
\medskip

The author would like to thank T.H. Parker for his suggestions and encouragement.

\medskip
   
  \vspace{1cm}
 %%%%%%%%%%%%%%%%%%%%%%%%%%%%%%%%%%%%%%%%%%%%%%%%%%%%%%%%%%%%
%%%%%%%%%%%%%%%%%  Section 2  %%%%%%%%%%%%%%%%%%%%%%%%%%%%%%
%%%%%%%%%%%%%%%%%%%%%%%%%%%%%%%%%%%%%%%%%%%%%%%%%%%%%%%%%%%%%%  
     
\section{Concentration Principle for Dirac Operators } 
\label{sec2}
\bigskip

This section describes some very general conditions in which one has a family $D_s$ of first order elliptic operators whose 
  low eigenvectors concentrate around submanifolds as $s\to \infty$.  
  
  \vspace{8mm}

Let $(X,g)$ be a closed Riemannian manifold  and $E,\;F$ be real vector bundles over $X$.  Suppose that
\begin{itemize}
\item $ D:\Gamma(E)\to \Gamma(F)$ is a first order elliptic differential operator with symbol $\sigma$, and
\item $\A:E\to F$  is  a real bundle map.
\end{itemize} 
From this data we can form the family of  operators
\begin{eqnarray*}
D_s=D+s\A
\end{eqnarray*}
where $ s\in\R$.  Furthermore, assuming that the bundles $E$ and $F$ have metrics, we can form the adjoint $A^*$, and the formal $L^2$ adjoint $D_s^*= D^*+ s\A^*$ of $D_s$.  With our conventions, the symbol of $D$ is the bundle map $\sigma :T^*X\to \Hom(E,F)$ defined by the equation
\[ 
D(f\xi) = \sigma(df)\xi + f D\xi, \qquad \forall f\in C^\infty(X),\ \xi\in \Gamma(E).  
\]
It follows that the symbol of $D^*$ is $-\sigma^*$. The main point of this paper is that such a family $D_s$ is especially interesting when $\A$ and the symbol $\sigma$ are related in the following way.

\begin{defn}[Concentrating pairs]
\label{MainDefCP}
In the above context, we say that  $(\sigma, \A)$  is a {\em concentrating pair} if it satisfies the algebraic condition
\begin{eqnarray}
\label{cond}
\A^*\circ \sigma(u) = \sigma(u)^*\circ \A,  \qquad \mbox{for every $u\in T^*X$.}
\end{eqnarray}
\end{defn}

\medskip

\begin{lemma}
\label{Weitzenboch}
A pair  $(\sigma, \A)$  is  a concentrating pair if and only if the operator 
$$
B_\A=D^{*}\circ \A+\A^{*}\circ D
$$
has order 0, that is, is a bundle map.   If so, then  for each $\xi\in C^{\infty}(E)$,
\begin{eqnarray}
\label{Dxiexpansion}
\|D_s\xi\|^2_2\, =\, \|D\xi\|^2_2\, +\, s^2\|\A\xi\|^2_2\, +\,  s \langle \xi,B_\A\xi\rangle
\end{eqnarray}
where these are $L^2$ norms and inner products, and hence
\begin{equation}
\label{2.ExpandD*D}
D_s^*D_s\ =\ D^*D +sB_\A +s^2\A^*\A
\end{equation}
\end{lemma}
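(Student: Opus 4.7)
The plan is to treat the three assertions in sequence, since each follows from a short calculation once the symbol of $D^{*}$ is in hand. The hinge is the observation that for any first order operator $P$ with symbol $\tau$, the composition $P\circ \A$ (with $\A$ a bundle map) is again first order with symbol $\tau\circ\A$, and $\A^{*}\circ P$ has symbol $\A^{*}\circ \tau$. Applying this to $P=D^{*}$ (whose symbol is $-\sigma^{*}$ by the standard integration-by-parts formula) gives the principal symbol of $B_\A$ at $u\in T^{*}X$ as
\[
\sigma_{B_\A}(u)\ =\ -\sigma(u)^{*}\circ \A\, +\, \A^{*}\circ\sigma(u).
\]
Thus $B_\A$ is of order zero (a bundle map) precisely when $\sigma_{B_\A}(u)=0$ for every $u$, which is exactly the concentrating condition \eqref{cond}. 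This proves the equivalence part.

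Next I would establish the $L^{2}$ expansion \eqref{Dxiexpansion} directly from the definition $D_s\xi=D\xi+s\A\xi$. Expanding the inner product,
\[
\|D_s\xi\|_2^{2}\ =\ \|D\xi\|_2^{2}\, +\, 2s\,\langle D\xi,\A\xi\rangle\, +\, s^{2}\|\A\xi\|_2^{2}.
\]
The cross term is symmetrized by writing $2\langle D\xi,\A\xi\rangle=\langle D\xi,\A\xi\rangle+\langle \A\xi,D\xi\rangle$, and then moving $D$ and $\A$ to the other side of the pairing using the formal adjoints, which produces $\langle\xi,(D^{*}\A+\A^{*}D)\xi\rangle=\langle\xi,B_\A\xi\rangle$. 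Note this step only uses that $\A$ is a bundle map (so no boundary terms arise) and not yet the concentrating hypothesis; formula \eqref{Dxiexpansion} is valid for arbitrary $\A$, though its point is that when $(\sigma,\A)$ is a concentrating pair the middle term is a genuine zeroth order quadratic form and not a differential expression.

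Finally, the operator identity \eqref{2.ExpandD*D} is obtained either by polarizing \eqref{Dxiexpansion} or, more directly, by expanding
\[
D_s^{*}D_s\ =\ (D^{*}+s\A^{*})(D+s\A)\ =\ D^{*}D\, +\, s(D^{*}\A+\A^{*}D)\, +\, s^{2}\A^{*}\A,
\]
and recognizing the middle term as $sB_\A$.

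I do not anticipate any real obstacle; the statement is essentially a principal symbol calculation packaged together with a Weitzenbock-style identity. The only place one must be careful is the sign in the symbol of $D^{*}$ and in keeping track of whether $\A$ is on the left or the right of the composition — getting either wrong would turn the symmetry condition \eqref{cond} into its negation and thereby scramble the equivalence.
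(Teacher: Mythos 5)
Your proposal is correct and follows essentially the same route as the paper: the equivalence is the statement that the principal symbol $-\sigma(u)^{*}\A+\A^{*}\sigma(u)$ of $B_\A$ vanishes exactly when \eqref{cond} holds (the paper computes this symbol by hand via $B_\A(f\xi)-fB_\A(f\xi)$-type Leibniz expansion, you invoke the composition rule for symbols, which is the same calculation), and the identities \eqref{Dxiexpansion} and \eqref{2.ExpandD*D} follow by the same expansion and integration by parts. Your remark that \eqref{Dxiexpansion} holds for arbitrary bundle maps $\A$, with the concentrating condition only ensuring the cross term is zeroth order, is accurate and consistent with the paper.
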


\begin{proof}
Given a tangent vector $u\in T^*_pX$, choose a  smooth function $f$ with $df|_p=u$.  Then for any smooth section $\xi$ of $E$,
\begin{eqnarray*}
B_\A(f\xi)\, = \, D^{*}(f\A(\xi))\, +\, \A^{*}(D(f\xi)) & \, =\, & -\sigma^*(df)\A\xi + fD^*\A\xi +\A^*\sigma(df)\xi + f \A^*D\xi \\
&\, =\, &\big(-\sigma^*(u) \A+\A^*\sigma(u)\big)\xi  + f B_\A(\xi).
\end{eqnarray*}
Thus \eqref{cond} holds if and only if $B_\A(f\xi)=f B_\A(\xi)$, which means that $B_\A$ is a zeroth order operator.   To obtain \eqref{Dxiexpansion}, expand $|D+s\A|^2$ and integrate; this  gives 
$$
\|D_s \xi\|^2_2\ =\ \|D\xi\|^2_2\, +\, s^2\,\|\A(\xi)\|^2_2\, +\, s\,\langle D\xi,\A\xi\rangle\, +\, s\,\langle \A\xi, D\xi\rangle
$$
where, after integrating by parts,  the last two terms are equal to $s\,\langle \xi,B_\A\xi\rangle$.
\end{proof}

\bigskip

\begin{rem} 
Given $D$ and $A$ as above,  one can always form the self-adjoint operators
$$
{\cal D}\ =\ \begin{pmatrix}0&D^*\\D&0\end{pmatrix}
\qquad\mbox{and}\qquad
{\cal A}\ =\ \begin{pmatrix}0&A^*\\A&0\end{pmatrix}.
$$
Then $(\sigma_\D, \A)$ is a concentrating pair if and only if  \eqref{cond} holds i.e. $\sigma_{\cal D}(u)\circ{\cal A}=-{\cal A}\circ\sigma_{\cal D}(u)$ for all $u\in T^*X$; this occurs when both  $(\sigma, A)$ and $(-\sigma^*, A^*)$ are concentrating pairs. 
\end{rem}

\vspace{5 mm}

The assumption that $D:\Gamma(E)\to\Gamma(F)$ is elliptic  implies  that the bundles $E$ and $F$ have the same rank.   Thus a generic bundle map $\A:E\to F$ is an isomorphism at almost every point.  In the analysis of the family $D+s\A$, a key role will be played by the {\em singular set of $\A$}, defined as  the set
$$
Z_\A\ : =\ \big\{x\in X\, |\, \ker \A(x)\ne0 \big\}
$$
where  $\A$ fails to be injective.
\vspace{5 mm}

The following proposition shows the importance of the concentrating condition  \eqref{cond}.  It shows that, under Condition~\eqref{cond},  all solutions of $D_s\xi=0$ concentrate along the singular set $Z_\A$.   More generally, it shows that all solutions to the eigenvalue problem $D_s^*D_s\xi=\lambda(s) \xi$ with $\lambda (s)=O(s)$ also concentrate along $Z_\A$.

 For each $\delta>0$, let $Z(\delta)$ be the $\delta$-neighborhood of  $Z_\A$,  and let 
$$
\Omega(\delta)=X\setminus Z(\delta)
$$ 
be its complement.

\begin{prop}[Concentration Principle]
\label{concentrationProp}
 For each $\delta>0$ and $C\ge 0$, there is a constant  $C'=C'(\delta, \A, C)>0$, such that if $\xi\in C^{\infty}(E)$ has $L^2$ norm $\|\xi\|_2=1$ and satisfies $\|D_s\xi\|^2_2\leq C |s| $, then 
\begin{equation}
\label{concentrationestimate}
\int_{\Omega(\delta)} |\xi|^2\ dv_g\ <\frac{C'}{|s|}.
\end{equation}
\end{prop}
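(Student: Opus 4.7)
The plan is to combine the Weitzenböck-style expansion (\ref{Dxiexpansion}) with a pointwise lower bound on $\A^*\A$ away from its singular set.

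First, I would apply Lemma~\ref{Weitzenboch} to the eigenfunction $\xi$ and discard the nonnegative $\|D\xi\|_2^2$ term, obtaining
\[
s^2 \|\A\xi\|_2^2 \;\le\; \|D_s\xi\|_2^2 \,-\, s\langle \xi, B_\A\xi\rangle.
\]
Since $B_\A$ is a bundle map over the compact manifold $X$, it is uniformly bounded: there is a constant $M = M(\A)$ with $|\langle \xi, B_\A\xi\rangle| \le M\|\xi\|_2^2 = M$. Combining with the hypothesis $\|D_s\xi\|_2^2 \le C|s|$ yields
\[
\|\A\xi\|_2^2 \;\le\; \frac{C + M}{|s|}.
\]

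Second, I would establish a uniform lower bound for $\A^*\A$ on $\Omega(\delta)$. The set $\Omega(\delta)=X\setminus Z(\delta)$ is closed, hence compact, and by construction lies entirely in the locus where $\A(x)$ is injective. The function $x\mapsto \inf_{|v|=1}|\A(x)v|^2$ is continuous and strictly positive on $\Omega(\delta)$, so by compactness it attains a positive minimum $c_\delta = c_\delta(\A,\delta)>0$. Consequently,
\[
c_\delta \int_{\Omega(\delta)} |\xi|^2\, dv_g \;\le\; \int_{\Omega(\delta)}|\A\xi|^2\, dv_g \;\le\; \|\A\xi\|_2^2 \;\le\; \frac{C+M}{|s|},
\]
which gives the stated estimate with $C' = (C+M)/c_\delta$, a constant depending only on $\delta$, $\A$, and $C$.

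There is no serious obstacle here; the argument is essentially immediate once one has the algebraic identity in Lemma~\ref{Weitzenboch}. The only place requiring a little care is verifying that the infimum of $\A^*\A$ on $\Omega(\delta)$ is positive — this uses compactness of $\Omega(\delta)$ together with the fact that $\Omega(\delta)$ is disjoint from $Z_\A$ by definition. It is also worth noting that the proof never uses ellipticity of $D$ in an essential way beyond the form of the Weitzenböck identity; the whole estimate is driven by the competition between the $s^2\|\A\xi\|_2^2$ term and the linear-in-$s$ bounds on the cross term and on $\|D_s\xi\|_2^2$.
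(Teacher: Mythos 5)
Your argument is correct and is essentially the same as the paper's: expand $\|D_s\xi\|_2^2$ via Lemma~\ref{Weitzenboch}, bound the cross term by $M|s|$ using compactness of $X$, and bound $\A^*\A$ from below on the compact set $\Omega(\delta)$ (the paper phrases this as $|\A_x(\xi)|\ge\kappa|\xi|$ on the closure of $\Omega(\delta)$, which is your $c_\delta=\kappa^2$). No gaps; the constant you obtain matches the paper's $(M_1+C)/\kappa^2$ up to notation.
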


\begin{proof}
 Applying (\ref{Dxiexpansion}) to such a $\xi$ gives the inequalities
\begin{eqnarray*}
C |s|\,\geq\,\|D_s \xi\|^2_2\, \geq\, s\, \langle \xi,B_\A\xi\rangle\, +\,\ s^2\, \|\A(\xi)\|^2_2.
\end{eqnarray*}
By Lemma \ref{Weitzenboch},  $B_\A$ is a tensor  on the compact space $X$, so $M_1=  \sup_{X}|B_\A|$  is finite. Hence, by Cauchy-Schwartz,
$$
\left|\langle \xi,B_\A(\xi)\rangle\right|\ \le\ M_1\int_{X}|\xi|^2\ dv_g\ =\ M_1.
$$
 But $\A$ is injective on the fiber over each point $x\in X\setminus Z_\A$,  so there is a positive constant $\kappa(x)$ with $|\A_x(\xi)|\geq \kappa(x)|\xi|$.  By compactness,  there is a constant $\kappa>0$ with $\kappa(x)\ge \kappa$ on the closure of $\Omega(\delta)$ and therefore  
\begin{eqnarray*}
s^2\, \|A(\xi)\|_2^2\, \ge\, \kappa^2 s^2\int_{\Omega(\delta)}|\xi|^2\ dv_g.
\end{eqnarray*}
Combining these inequalities gives 
\begin{eqnarray*}
\int_{\Omega(\delta)}|\xi|^2\ dv_g\ \le\ \frac{M_1+  C}{\kappa^{2}|s|}.
\end{eqnarray*}
\end{proof}

\begin{rem}
\label{erates}
A bound of the form $|\A(\xi)|^2 \geq Cr^a |\xi|^2$ on a tubular neighborhood of $Z_\A$, where  $r$ is the distance  from $Z_\A$ and $a>0$, gives a bound on how the constant $C'$ in \eqref{concentrationestimate} depends on $\delta$. We will require such an assumption in (\ref{normalrates}) below.
\end{rem}

In the next corollary we obtain $C^{\ell,\a}$ estimates;   the first part of the proof was suggested by Akos Nagy.
\begin{cor}
Suppose $\xi\in C^{\infty}(E)$ is a section with $L^2$ norm 1 satisfying $D^*_sD_s\xi= \lambda_s \xi$ where $|\lambda_s|\leq C |s|$. For each $k\in \mathbb{N}$ and region $\Omega(\delta)$, there exist $C' = C'(\delta, k,\ell,\a, C)$ so that
$$
\|\xi\|_{C^{\ell,\a}(\Omega(\delta))} \leq C'|s|^{-\frac{k}{2}}.
$$
\end{cor}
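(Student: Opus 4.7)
The plan is to first bootstrap the $L^2$ concentration from Proposition~\ref{concentrationProp} into arbitrary polynomial decay on $\Omega(\delta)$, and then to upgrade this to a $C^{\ell,\alpha}$ rate via interior elliptic regularity for the equation $D_s^*D_s\xi=\lambda_s\xi$.

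For the $L^2$ step, I would fix $0<\delta'<\delta$ and choose a smooth cutoff $\chi$ with $\chi\equiv 1$ on $\Omega(\delta)$ and $\mathrm{supp}(\chi)\subset\Omega(\delta')$. Pairing the eigenvalue equation with $\chi^2\xi$ and expanding via~\eqref{2.ExpandD*D} gives
\[
\int_X\chi^2\langle D^*D\xi,\xi\rangle \,+\, s\int_X\chi^2\langle B_\A\xi,\xi\rangle \,+\, s^2\int_X\chi^2|\A\xi|^2 \,=\, \lambda_s\int_X\chi^2|\xi|^2.
\]
Integration by parts together with AM--GM bounds the first integral below by $-\int|\sigma(d\chi)\xi|^2$; on $\mathrm{supp}(\chi)$ one has $|\A\xi|^2\geq\kappa(\delta')^2|\xi|^2$ by injectivity of $\A$ away from $Z_\A$; and the remaining two terms grow only linearly in $|s|$. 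For $|s|$ sufficiently large this produces the one-step improvement
\[
\|\xi\|_{L^2(\Omega(\delta))}^2 \,\leq\, \frac{C}{s^2}\,\|\xi\|_{L^2(\Omega(\delta'))}^2.
\]
Chaining this inequality across a nested sequence $\Omega(\delta_0)\supset\cdots\supset\Omega(\delta_K)=\Omega(\delta)$ (with all $\delta_j>0$ fixed in advance) and using $\|\xi\|_{L^2(X)}=1$ as the base case yields $\|\xi\|_{L^2(\Omega(\delta))}^2\leq C_K|s|^{-2K}$ for every $K\in\mathbb{N}$.

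For the regularity upgrade, I would rewrite the equation as $D^*D\xi=(\lambda_s - sB_\A - s^2\A^*\A)\xi=:g_s$. Since $D^*D$ is a fixed elliptic operator (its coefficients are independent of $s$), standard interior Sobolev regularity on any pair $\Omega\subset\subset\Omega'$ reads
\[
\|\xi\|_{H^{m+2}(\Omega)} \,\leq\, C\|g_s\|_{H^m(\Omega')} + C\|\xi\|_{L^2(\Omega')} \,\leq\, Cs^2\|\xi\|_{H^m(\Omega')} + C\|\xi\|_{L^2(\Omega')}.
\]
Inducting on $m$ over a finite chain of shrinking neighborhoods of $\Omega(\delta)$ produces $\|\xi\|_{H^m(\Omega(\delta))}\leq C_m|s|^{2m}\|\xi\|_{L^2(\Omega(\delta_0))}$ for some $\delta_0<\delta$, which combined with the Step~1 bound is $\leq C_{m,K}|s|^{2m-K}$. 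Choosing $K\gg m$ and invoking the Sobolev embedding $H^m(\Omega(\delta))\hookrightarrow C^{\ell,\alpha}(\Omega(\delta))$ for $m>\ell+\alpha+\dim X/2$ then yields the claimed $|s|^{-k/2}$ rate.

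The main technical obstacle is keeping careful track of the $|s|$-dependence throughout the elliptic bootstrap: each two-derivative gain costs a factor of $s^2$ because the lower-order part of $D_s^*D_s$ grows quadratically in $s$. The reason the argument closes cleanly is precisely that Nagy's cutoff iteration in Step~1 delivers \emph{arbitrary} polynomial $L^2$ decay at no cost in regularity, so one simply selects the rate $K$ in Step~1 large enough to absorb the $|s|^{2m}$ loss that Step~2 incurs.
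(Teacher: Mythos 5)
Your proposal is essentially the paper's own argument: the same iterated-cutoff scheme (the paper's bound \eqref{kconc}, there proved by induction with a cutoff $\rho$ and the expansion \eqref{2.ExpandD*D}) to get arbitrary polynomial $L^2$ decay on $\Omega(\delta)$, followed by the same $s$-weighted elliptic bootstrap for $D^*D\xi=(\lambda_s-sB_\A-s^2\A^*\A)\xi$ and Sobolev/Morrey embedding, with the $|s|^{2}$-per-step loss absorbed by choosing the $L^2$ decay rate large. One small correction: your one-step improvement is not $Cs^{-2}\|\xi\|^2_{L^2(\Omega(\delta'))}$ but only $C|s|^{-1}\|\xi\|^2_{L^2(\Omega(\delta'))}$, since the $\lambda_s$ and $s\langle B_\A\xi,\xi\rangle$ terms on the right are linear in $|s|$ before dividing by $s^2$; this is harmless, as chaining more steps still yields decay $|s|^{-K}$ for every $K$, exactly as in the paper.
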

\begin{proof}
First note that for every $\delta>0$ and integer $k\ge 1$,  there exist $C' = C'(\delta, k, C)$ so that whenever $\xi\in C^{\infty}(E)$  satisfies $\|\xi\|_2=1$ and  $\|D_s\xi\|^2_2\leq C |s| $, one has the bound  
\begin{equation}
\label{kconc}
\int_{\Omega(\delta)} |\xi|^2\ dv_g\ < C'|s|^{-k}  \qquad\mbox{for  $s\gg 0$.}
\end{equation}
This can proved by induction.  It holds for $k=1$ by (\ref{concentrationestimate}):  assume that it holds for $k\in \mathbb{N}$. Let $\rho$ a smooth cutoff function supported in $\Omega(\delta)$ with $\rho|_{\Omega(2\delta)}\equiv 1$ and note that
\begin{equation}
\label{2.2X}
D_s(\rho\xi) = \sigma(d\rho) \xi + \rho D_s\xi.     
\end{equation}
Integrating the squared norms over $\Omega(\delta)$ and  estimating from below using \eqref{2.ExpandD*D} yields
\begin{eqnarray}
\label{2.3X}
\int_{\Omega(\delta)}|D_s(\rho \xi)|^2dv_g &\geq& |s|\, \int_{\Omega(\delta)}\langle \xi,\rho^2 B_\A\xi\rangle dv_g\, +\,\ s^2\, \int_{\Omega(\delta)}|\rho\A(\xi)|^2dv_g\nonumber \\
 &\geq& -|s|M_1\int_{\Omega(\delta)}|\xi|^2dv_g \,+\, s^2 \kappa^2 \int_{\Omega(2\delta)}|\xi|^2dv_g, 
\end{eqnarray}
while integration by parts gives
$$
\int_{\Omega(\delta)}|\rho D_s\xi|^2dv_g\ =\  \int_{\Omega(\delta)}\langle \rho D_s\xi, \rho D_s\xi\rangle dv_g \ =\   \int_{\Omega(\delta)}\langle\xi, D_s^*(\rho^2 D_s\xi)\rangle dv_g 
$$
since $\rho$ vanishes on  $\partial\Omega(\delta)$.  But $ D_s^*(\rho^2 D_s\xi) = - 2\sigma^*(d\rho) (\rho D_s\xi) +\rho^2D^*_sD_s\xi$ with  $D^*_sD_s\xi= \lambda_s \xi$, so by Cauchy-Schwartz inequality followed by Young's inequality
\begin{eqnarray*}
\int_{\Omega(\delta)}|\rho D_s\xi|^2dv_g &=& -2\int_{\Omega(\delta)}\langle \sigma(d\rho)\xi, \rho D_s\xi\rangle dv_g + \lambda_s \int_{\Omega(\delta)} |\rho\xi|^2dv_g\\ &\leq& C_\delta\Big(\int_{\Omega(\delta)}|\rho D_s\xi|^2dv_g\Big)^{1/2}\Big(\int_{\Omega(\delta)}|\xi|^2 dv_g\Big)^{1/2} + \lambda_s\int_{\Omega(\delta)}|\xi|^2 dv_g  \\ 
 &\leq& \frac{1}{2}\int_{\Omega(\delta)}|\rho D_s\xi|^2dv_g\,+\,  (\frac{C^2_\delta}{2}+ \lambda_s)\int_{\Omega(\delta)}|\xi|^2dv_g. 
\end{eqnarray*}
Absorbing the common terms in the left hand side
\begin{eqnarray}
\label{2.4A}
\int_{\Omega(\delta)}|\rho D_s\xi|^2dv_g \leq (C^2_\delta+ 2\lambda_s) \int_{\Omega(\delta)}|\xi|^2dv_g.
\end{eqnarray}
Combining \eqref{2.2X}, \eqref{2.3X}, \eqref{2.4A} and using the inductive step then gives 
$$
s^2\kappa^2\int_{\Omega(2\delta)}|\xi|^2dv_g \ \leq\  \big(2\lambda_s+C_{\delta} + |s|M_1\big) \int_{\Omega(\delta)}|\xi|^2dv_g
 \ \leq \ \frac{C_{\delta}}{|s|^{k-1}}
$$
for an updated constant  $C_\delta$. Replacing $\delta$ by  $\frac{\delta}{2}$ finishes induction and establishes the bound \eqref{kconc}.

\medskip

Now  writing $D\xi = D_s\xi - s\A\xi$, we can apply  \eqref{2.4A} and \eqref{kconc} on $\Omega(2\delta)$ to obtain
$$
\|D\xi\|_{L^2(\Omega(2\delta))} \ \leq\  \|D_s\xi\|_{L^2\Omega(2\delta))}\,+\, s\|\A\xi\|_{L^2(\Omega(2\delta))}\, \leq \, C_{\delta,k} |s|^{-\frac{k}{2}}. 
$$
Combining this with the interior elliptic estimate for $D$ on  $\Omega(3\delta)$, we have
$$
\|\nabla\xi\|_{L^2(\Omega(3\delta))} \  \leq\     C_{\delta,k}\left(\|D(\xi)\|_{L^2(\Omega(2\delta))}  + \|\xi\|_{L^2(\Omega(2\delta))}\right)\ \le\  C'_{\delta,k} |s|^{-\frac{k}{2}}. 
$$
Replacing $\delta$ by  $\frac{\delta}{3}$  establishes the bound
\begin{eqnarray}
\label{2.6}
\|\xi\|_{L^{1,2}(\Omega(\delta))}\ \le\  C'_{\delta,k} |s|^{-\frac{k}{2}}
\end{eqnarray}
for $s\gg 0$.

Next, from \eqref{2.ExpandD*D} and the eigenvalue equation, we have 
\begin{equation}
\label{2.D*D+Ev}
D^*D\xi+sB_\A\xi+s^2\A^*\A \xi = D_s^*D_s\xi=\lambda_s\xi.  
\end{equation}
 The elliptic estimate for $D^*D$ then gives
$$
\|\xi\|_{L^{2,2}(\Omega(2\delta))}\ \leq\ C\big(\|D^*D\xi\|_{L^2(\Omega(\delta))}+ \|\xi\|_{L^2(\Omega(\delta))}\big) \   \leq  \  C' (1+s^2)\|\xi\|_{L^2(\Omega(\delta))}\ \le\     C''_\delta |s|^{-\frac{k-4}{2}},
$$
for large $s$.  Again,  we can  replace $\delta$ by $\frac{\delta}{2}$.

We can now bootstrap, repeatedly differentiating \eqref{2.D*D+Ev} and using elliptic estimates to obtain
$$
\|\xi\|_{L^{\ell,2}(\Omega(\delta))}\ \leq\ C_{\delta,k,\ell} |s|^{-\frac{k}{2}}
$$
for every $\ell$. Morrey's inequalities then give the stated bound on  the $C^{\ell,\a}$ norms. 
\end{proof}

%Proposition~\ref{concentrationProp} shows that, for each $C$, the  eigensections $\xi$ satisfying  $D_s^*D_s \xi = \lambda(s) \xi$ with $|\lambda(s)|\le C$, concentrate around $Z_\A$ for large $s$.   An interesting question arises as to what extend those localized solutions can  be reconstructed using local data obtained from $Z_\A$.  

The concentration condition fits nicely into  the context of Dirac operators. Recall that a vector space $V$  is a  representation of the Clifford algebra $C(\R^n)$ if there is a linear map $c:\R^n\to \End(V)$ that satisfies the Clifford relations
\begin{equation}
\label{0.Cliffordrelations}
c(u)c(v)+c(v)c(u)\ =\ -2 \langle u, v\rangle\ Id.
\end{equation}
for all $u, v\in \R^n$.  
  
\begin{lemma}
\label{lr}
The concentration condition \eqref{cond}  with $\sigma = c$ is equivalent to  
\begin{equation}
\label{condversion2}
 c(u)\circ \A^* = \A\circ c(u)^*\  \quad  \forall u\in T^*X.
\end{equation}
Hence the concentration principle applies to $D + s\A$ concentrates if and only if it applies for the adjoint operator  $D^* + s\A^*$.
\end{lemma}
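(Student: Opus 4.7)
The plan is to exploit a single algebraic consequence of the Clifford relations \eqref{0.Cliffordrelations}, namely that $c(u)^*\circ c(u)=|u|^2\,Id_E$ and $c(u)\circ c(u)^*=|u|^2\,Id_F$. This makes $c(u):E\to F$ an isomorphism for $u\neq 0$ with inverse $c(u)^*/|u|^2$. Without such an input nothing happens: taking adjoints of either \eqref{cond} or \eqref{condversion2} merely returns the same identity, so the equivalence genuinely relies on the Clifford structure rather than on formal symmetry.

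Assume \eqref{cond}, i.e.\ $\A^*\circ c(u)=c(u)^*\circ\A$. I would compose on the left with $c(u)$ and use $c(u)\circ c(u)^*=|u|^2\,Id_F$ to obtain
\[
c(u)\circ\A^*\circ c(u)\;=\;c(u)\circ c(u)^*\circ\A\;=\;|u|^2\,\A.
\]
For $u\neq 0$, composing on the right with $c(u)^{-1}=c(u)^*/|u|^2$ produces $c(u)\circ\A^*=\A\circ c(u)^*$, i.e.\ \eqref{condversion2}; the case $u=0$ is trivial. The reverse implication is entirely symmetric: beginning with \eqref{condversion2} and composing on the right with $c(u)$ yields the same intermediate identity $c(u)\,\A^*\, c(u)=|u|^2\,\A$, after which composition on the left by $c(u)^*/|u|^2$ recovers \eqref{cond}.

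For the final assertion, note that $D^*+s\A^*$ has symbol $-c^*$. The concentration condition \eqref{cond} applied to the pair $(-c^*,\A^*)$ reads $\A\circ(-c(u)^*)=(-c(u)^*)^*\circ\A^*$, which simplifies to $c(u)\circ\A^*=\A\circ c(u)^*$, i.e.\ exactly \eqref{condversion2}. By the equivalence just proved, this holds precisely when \eqref{cond} holds for $(D+s\A)$. The only non-routine step is the observation in the first paragraph: once one sees that pure adjoint symmetry is inadequate and that the Clifford identity $c(u)^*c(u)=|u|^2\,Id$ must enter, the rest is two short computations.
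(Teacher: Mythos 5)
Your proof is correct and takes essentially the same route as the paper: the paper multiplies \eqref{cond} on the left by $c(u)$ and on the right by $c(u)^*$ and uses $c(u)c(u)^*=|u|^2\,Id$, which is exactly your composition with $c(u)$ followed by $c(u)^{-1}=c(u)^*/|u|^2$. Your explicit check of the final assertion via the symbol $-c^*$ of $D^*$ is a detail the paper leaves implicit, but it is the same argument.
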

\begin{proof}
Multiplying \eqref{cond} on  the left by $c(u)$ and on the right by $c(u)^*$ we get
$$
|u|^2 \A\circ c(u)^* = c(u)\circ \A^* |u|^2
$$ 
which gives \eqref{condversion2}.  The proof in the  opposite direction is similar.
\end{proof}

 We henceforth assume that    $E$ and $F$ are bundles  of equal rank, $E\oplus F$ admits a $\Z_2$ graded $\Spin^c$ structure  $c:\R^n\to \End(E\oplus F)$, and that $E\oplus F$ has  a $\Spin^c$ connection $\nabla$ that preserves the grading and satisfying $\nabla c = 0$. We also assume that $\A:E\to F$ is a bundle map that satisfes the concentrating condition \eqref{cond}. We can then form the Dirac operator 
\begin{equation}
\label{1.DefDirac}
D = c\circ \nabla : \Gamma(E)\rightarrow \Gamma(F)
\end{equation}
and the family $D_s=D+s\A$.  
\vspace{3mm}

We impose two further  conditions on $\A$ that  guarantee that the  components $Z_\ell$ of  the singular set  $Z_\A$ are submanifolds and that the rank of $\A$ is constant on each $Z_\ell$.  For this, we regard ${\cal A}$ as a section of a subbundle $\L$ of $\Hom(E,F)$ as in the following diagram:
\bear
\label{Ldiagram}
 \xymatrix{
\ \ \L\ \ \ar@{^{(}->}[r]  \ar[d]  & \Hom(E, F) \supseteq {\cal F}^l\\
(X, g) \ar@/^1pc/[u]^{\mathcal A} & 
}
\eear
Here $\L$ is a bundle that parameterizes some family of linear maps $A:E\to F$ that  satisfy  the concentration condition (\ref{cond})  for the operator \eqref{1.DefDirac}, that is, each  element $A\in\L$ satisfies  $A^*\circ c(u) = c(u)^*\circ A$  for every $u\in T^*X$.    Inside the total space of the bundle $\Hom(E, F)$,  the set of linear maps with $l$-dimensional kernel is a   submanifold 
 $\mathcal{F}^l$;    because $E$ and $F$ have the same rank, this submanifold  has codimension~$l^2$. In all of our examples $\L\cap {\cal F}^l$ is a manifold for every $l$. As a section of $\L$, $\A$ will be chosen transverse to  $\L\cap {\cal F}^l$ for every $l$. As a consequence of the Implicit Function Theorem, ${\cal A}^{-1}(\L\cap {\cal F}^l)$ will be a submanifold of $X$ for every $l$. The singular set decomposes as a union of these submanifolds and, even further, as a union of connected components $Z_\ell$:
\begin{equation}
\label{defZ_l}
Z_\A\, =\, \bigcup_l {\cal A}^{-1}(\L\cap {\cal F}^l)\, =\,\bigcup_\ell Z_\ell.
\end{equation}
$\A$ has constant rank along each $Z_\ell$, so $\ker \A$ and $\ker \A^*$ are well-defined bundles over $Z_\ell$. 
\vspace{10mm}

The special case when $Z_\A$ is a finite set of points was studied by Prokhorenkov and Richardson in \cite{pr}.   In this case,  an important role is played by certain  vector spaces $\K_p$ and $\hat{\K}_p$  associated with each $p\in Z_\A$.   These are defined as follows.

Fix  an isolated point $p \in Z_\A$. Let $K$ be the bundle obtained by parallel translating  $\ker \A_p$ along radial geodesics in a geodesic ball centered at $p$. We require that $\A$ has a non-degenerate zero at $p$ in the sense that 
\begin{eqnarray}
\label{normalrates}
\A^*\A|_K \ =\   r^2M + O(r^3)
\end{eqnarray}
where $r$ is the distance function from   $p$, and $M$ is a positive-definite symmetric endomorphism of  the bundle $K$. (It is shown  in \cite{pr} that $\A$ can always be perturbed to satisfy this condition.)
Choose an orthonormal frame $\{e_\a\}$ of the $T_pX$ with dual frame by $\{e^\a\}$ and define 
\begin{equation}
\label{IntroDefM}
M_\a = - c(e^\a)\nabla_{e_\alpha} \A_p:\,  \ker \A_p\rightarrow \ker \A_p \quad \mbox{and}\quad \hat{M}_\a = - c(e^\a) \nabla_\a \A^*|_p: \ker \A_p^* \to\ker \A_p^*.
\end{equation}
(Here, our sign convention  differs from that in \cite{pr}.)

Under assumption (\ref{normalrates}),  $\{M_\a\}$ and $\{\hat M_\a\}$ are two collections of commuting isomorphisms; each is self-adjoint by Condition \ref{cond}, and its spectrum is real,   symmetric and, by (\ref{normalrates},  does not contain 0. Consider the simultaneous positive eigenspace 
$$
K^+_\a = \bigoplus \Big\{\mbox{positive eigenspaces of}\ M_\a \Big\}.
$$  
\begin{defn}
\label{IntroDefKp}
For each  component  $p\in Z_\A$ define 
$$
\K_p = \bigcap_\a K_\a^+ \subset \ker \A_p.
$$
For the adjoint $\A^*$, $\hat\K_p\subset \ker \A^*_p$ is defined similarly  using the family $\{\hat M_\a\}$.
\end{defn}

The main result  in \cite{pr} is  the following localization theorem that expresses  the index of $D$ in terms of   the vector spaces   $\K_p$ and $\hat{\K}_p$.

\begin{theorem}
\label{Igor} 
Under the above assumptions the index of $D$ can be calculated by the local contributions from $Z_\A$ as
$$
\ind D = \sum_{p\in Z_\A} (\dim \K_p - \dim \hat\K_p).
$$
\end{theorem}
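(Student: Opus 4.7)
The plan is to combine homotopy invariance of the Fredholm index with the concentration principle of Section~\ref{sec2}, reducing the index computation to a sum of local model-operator problems at each point of $Z_\A$, in the spirit of Witten's deformation approach to Morse theory and the Bismut--Lebeau local index theorem. Since $\A$ is a bounded zeroth-order bundle map, $D_s:L^{1,2}(E)\to L^2(F)$ is Fredholm with $\ind D_s = \ind D$ for every $s$. It therefore suffices to compute $\dim\ker D_s$ and $\dim \ker D_s^*$ separately in the limit $s\to\infty$.

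\textbf{Localization.} By Proposition~\ref{concentrationProp} together with the subsequent $C^{\ell,\a}$ corollary, any $L^2$-normalized $\xi\in\ker D_s$ (and, more generally, any eigensection of $D_s^*D_s$ with eigenvalue $O(s)$) decays faster than any power of $1/s$ on the complement of every neighborhood of $Z_\A$. Because $Z_\A=\{p_1,\dots,p_N\}$ is a finite set of isolated non-degenerate points, the problem decouples into $N$ independent analyses on small geodesic balls $B(p_j,\delta)$.

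\textbf{Model operator.} Fix $p\in Z_\A$ and work in normal coordinates $\{x^\a\}$ with $E,F$ trivialized by radial parallel transport (which preserves the Clifford action since $\nabla c=0$). Using $\A_p=0$, Taylor expansion gives $D=c(e^\a)\partial_{x^\a}+O(|x|)$ and $\A(x)=x^\a\nabla_{e_\a}\A|_p+O(|x|^2)$, so under the rescaling $y=\sqrt{s}\,x$ the operator $s^{-1/2}D_s$ converges on compact subsets of $\R^n$ to the model operator
\begin{equation*}
L \;=\; c(e^\a)\,\partial_{y^\a}\,+\, y^\a\,\nabla_{e_\a}\A|_p\;:\;\Gamma(E_p)\longrightarrow \Gamma(F_p).
\end{equation*}
Differentiating \eqref{cond} at $p$ shows that each $\nabla_{e_\a}\A|_p$ inherits the concentration relation; combining this with \eqref{2.ExpandD*D} applied to $L$, the cross-term $B_\A$ reduces to $2\sum_\a M_\a$, while the non-degeneracy hypothesis \eqref{normalrates} guarantees that the quadratic part $|y|^2 M$ is positive definite. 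Thus $L^*L$ is a sum of harmonic oscillators coupled by the commuting self-adjoint endomorphisms $M_\a$, whose $L^2$ ground states are Gaussians taking values in the simultaneous positive eigenspace $\bigcap_\a K_\a^+ = \K_p$. The same calculation for $L L^*$ yields $\ker_{L^2}L^*\cong \hat\K_p$.

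\textbf{Spectral comparison.} The main obstacle, and the analytic heart of the proof, is to upgrade the computation of $\ker L$ to an exact identification of $\ker D_s$ for large $s$. This requires a two-sided comparison. For the inequality $\dim\ker D_s\ge \sum_j\dim\K_{p_j}$, cut off the Gaussian ground states of each $L_{p_j}$ by a bump function supported in $B(p_j,\delta)$ and rescale back; by \eqref{2.ExpandD*D} these trial sections have $\|D_s\xi\|_2^2=O(1)$, which combined with a uniform lower bound on the gap to the next eigenvalue of $L^*L$ produces the required elements of $\ker D_s$ via the min-max principle. For the reverse inequality, use the concentration and $C^{\ell,\a}$ decay estimates to show that any $L^2$-normalized $\xi\in\ker D_s$ is $O(|s|^{-\infty})$ outside $\bigcup_j B(p_j,\delta)$, so its rescaled restriction to each ball is $L^2$-precompact and its limits lie in $\ker L_{p_j}$; a dimension count then yields $\dim\ker D_s\le \sum_j\dim\K_{p_j}$. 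The identical argument for $D_s^*$ gives $\dim\ker D_s^*=\sum_j\dim\hat\K_{p_j}$, and the index identity of the Strategy step completes the proof. The hard part is establishing the uniform spectral gap and showing that no eigenvalue of $D_s^*D_s$ can escape from the cluster near zero without being detected by one of the model operators.
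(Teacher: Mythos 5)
First, a point of orientation: the paper does not prove Theorem~\ref{Igor} at all --- it is quoted as the main result of \cite{pr}, so there is no in-paper argument for your proposal to match; what you have written must stand or fall on its own, measured against the standard Witten-deformation proof that \cite{pr} carries out.

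Measured that way, there is a genuine gap, and it is structural rather than technical. Your Strategy step reduces the theorem to showing $\dim\ker D_s=\sum_j\dim\K_{p_j}$ and $\dim\ker D_s^*=\sum_j\dim\hat\K_{p_j}$ for $s\gg 0$, and the Spectral comparison step tries to prove exactly that. These equalities are false in general. Take Example~3 of the paper with $\gamma=df$ for a Morse function $f$: then $D_s=(d+d^*)+s\tilde{c}(df)$ is Witten's operator, conjugate to $d+d^*$ acting on the deformed complex, so $\dim\ker D_s$ equals the sum of the even Betti numbers for every $s$, whereas $\sum_p\dim\K_p$ is the number of critical points of even index, which is strictly larger whenever $f$ has excess critical points. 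Only the \emph{difference} of the two counts localizes, not each kernel separately. Relatedly, your min-max argument with cut-off Gaussian trial sections cannot ``produce the required elements of $\ker D_s$'': trial sections with $\|D_s\xi\|_2^2=O(1)$ only certify eigenvalues of $D_s^*D_s$ of size $O(1)$, not zero modes. The correct route (the one in \cite{pr}, and in Witten's original argument) is to show that for $s$ large the spectrum of $D_s^*D_s$ below a fixed threshold $\epsilon$ consists of exactly $\sum_j\dim\K_{p_j}$ eigenvalues, that the spectrum of $D_sD_s^*$ below $\epsilon$ consists of exactly $\sum_j\dim\hat\K_{p_j}$, and then to use the fact that $D_s$ intertwines the nonzero-eigenvalue eigenspaces of $D_s^*D_s$ and $D_sD_s^*$ isomorphically, so that the small nonzero eigenvalues cancel in pairs and only the index survives: $\ind D=\ind D_s=\sum_j(\dim\K_{p_j}-\dim\hat\K_{p_j})$. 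Finally, the eigenvalue-counting itself requires the uniform spectral gap and the ``no eigenvalue escapes detection by the model operators'' statement, which you explicitly defer (``the hard part is\dots''); since that is precisely the analytic content of the theorem, the proposal as written is an outline of the right circle of ideas (rescaling, harmonic-oscillator model operators with ground states valued in $\K_p$, $\hat\K_p$) but not a proof, and its headline reduction would need to be replaced by the small-eigenvalue/supersymmetric-cancellation formulation before the remaining analysis could even be attempted. (A smaller quibble: the zeroth-order cross term of the model operator is $\sum_\alpha$ of the $M_\alpha$-type endomorphisms, not $2\sum_\alpha M_\alpha$, and one must also treat the part of $E_p$ complementary to $\ker\A_p$, where the potential term $s^2\A^*\A$ is uniformly positive.)
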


\vspace{10 mm}

\medskip
   
  \vspace{1cm}
 %%%%%%%%%%%%%%%%%%%%%%%%%%%%%%%%%%%%%%%%%%%%%%%%%%%%%%%%%%%%
%%%%%%%%%%%%%%%%%  Section3  %%%%%%%%%%%%%%%%%%%%%%%%%%%%%%
%%%%%%%%%%%%%%%%%%%%%%%%%%%%%%%%%%%%%%%%%%%%%%%%%%%%%%%%%%%%%%  

\section{Basic Examples} 
\label{sec3}
\bigskip

The concentration condition (\ref{cond}) is an algebraic condition on the symbol $c$ of the Dirac operator $D$. Thus the search for concentrating pairs $(c, \A)$ is an algebraic problem about  representations of  Clifford algebras and their connection with geometry.  In  the next several sections,  we start with basic examples and progressively built more elaborate ones.

Our first two examples are in dimension two.  Both are perturbations of the form $D_s=D+s\A$ of a $\del$ operator by {\em conjugate-linear} zeroth-order operator $\A$.   Thus  $D_s$ is a {\em real} operator, although in the examples it is convenient to write $D_s$ using complex notation.

\vspace{5mm}
     
\noindent{\bf Example 1: } For smooth $L^2$ functions $f, g:\mathbb{C}\to\mathbb{C}$, consider the operators
$$
D_sf=\del f+ sz\bar{f}  \quad \mbox{and} \quad D'_s g = -\partial g+ sz\bar{g}.
$$ 
These have the form $D+s\A$ where  $\A$ is the self-adjoint real linear map  $\A f = z\bar{f}$.  Using Lemma~\ref{Weitzenboch}, the calculations
$$
{\mathcal B}_\A f\ =\ (\del^* \A + \A^* \del)f \ =\   - \partial (z\bar{f}) - z\overline{\del f}\ =\   - \bar{f} 
$$
and
\begin{equation}
\label{1.2.1}
{\mathcal B}'_\A f\ =\ (\del\A^* - \A \partial) f \ =\  \del (z\bar{f}) -z \overline{\partial f} \ =\  0  
\end{equation}
show that both  $(\sigma_{\bar\partial}, \A)$ and $(\sigma_{-\partial}, \A^*)$  are concentrating pairs. For these equations, we can find explicitly that $\ker D_s = \R e^{-s|z|^2}$ and $\ker D'_s = 0$.  The non-zero  solutions of $D_sf=0$ clearly concentrate at the origin as $s\to \infty$.

Similarly the equation $\del f + s\bar{z} \bar{f} = 0$ has only trivial solutions, and its adjoint has a one-dimensional real kernel. 

\vspace{5mm}

Next consider real Dirac operators on Riemann surfaces. In Section~7 of \cite{t1}, C. H. Taubes described a concentration property for perturbed $\del$-operators on complex line bundles over Riemann surfaces.   Example 2 generalizes Taubes observation to higher rank bundles:
\bigskip

\noindent{\bf Example 2:} Let $(\Sigma,g)$ be a closed Riemann surface with anticanonical bundle $\bar{K}$,  and let $E$ be a holomorphic bundle of rank $r$  with a Hermitian metric $\langle\cdot,\cdot\rangle$,    conjugate linear in the second argument.  The direct sum of the $\del$-operator $\del:\Gamma(E)\rightarrow \Gamma(\bar{K}E)$ and its adjoint is a 
self-adjoint Dirac operator 
$$
D= \begin{pmatrix}0&\bar\partial^*\\ \bar\partial& 0\end{pmatrix}:  \Gamma(E\oplus \bar{K}E) \to \Gamma(E\oplus \bar{K}E).
$$
The symbol of $D$, applied to a $(0, 1)$-form $u$ is
\begin{eqnarray}
\label{interior/exteriormult}
c(u)(\xi)\ =\ u\wedge\xi\ -\ \iota_u\xi,\quad \xi\in E\oplus\bar{K}E.
\end{eqnarray}
One checks that this satisfies the Clifford relations \eqref{0.Cliffordrelations}, so defines a Clifford bundle structure on $E\oplus\bar{K}E$.   Now choose
$$
\overline{\mu} \in\Gamma(\Sigma,\,\bar{K}\otimes_\mathbb{C} \Sym^2_\mathbb{C}E).
$$
Combined with the conjugate linear isomorphism $E\cong E^*$ defined by  the  hermitian metric,  $\overline{\mu}$ becomes a conjugate linear map $\mu : E\rightarrow \bar{K}E$.  Set 
$$
\A= \begin{pmatrix}0&\mu^*\\ \mu & 0 \end{pmatrix}\in\End_\R(E\oplus\bar{K}E).
$$

\vspace{3mm}

\begin{lemma}
\label{hrank}
$(c, \A)$ is a concentrating pair.
\end{lemma}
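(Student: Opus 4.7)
Plan. The condition \eqref{cond} to verify, specialized to $\sigma=c$, reads $\A^*c(u)=c(u)^*\A$ for every $u\in T^*\Sigma$. I would begin with two algebraic simplifications. First, the off-diagonal symmetric form of $\A$ is manifestly self-adjoint, giving $\A^*=\A$. Second, because $D$ is formally self-adjoint, its symbol is skew-adjoint: the relation $\sigma_{D^*}=-\sigma_D^*$ together with $D=D^*$ forces $c(u)^*=-c(u)$. Hence \eqref{cond} is equivalent to the anticommutation
$$
\A\,c(u)+c(u)\,\A\;=\;0,\qquad u\in T^*\Sigma,
$$
which is the natural concentrating condition on the odd endomorphism $\A$ of the $\mathbb{Z}_2$-graded Clifford bundle $E\oplus\bar KE$.

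Next I would expand the anticommutator in block form on $E\oplus\bar KE$. By \eqref{interior/exteriormult}, the symbol acts on $E$ by wedging (landing in $\bar KE$) and on $\bar KE$ by minus contraction (landing in $E$); for real $u$, only the $(0,1)$-part contributes to the wedge and only the $(1,0)$-part to the contraction. Multiplying out the $2\times 2$ blocks, the anticommutation reduces to the single pointwise identity
$$
\mu^*\bigl(u^{0,1}\wedge\xi\bigr)\;=\;\iota_{u^{1,0}}(\mu\,\xi)\qquad(\xi\in E,\ u\in T^*\Sigma),
$$
together with its real adjoint on $\bar KE$, which is implied.

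The key step is to verify this identity from the hypothesis $\bar\mu\in\Gamma(\bar K\otimes\Sym^2_\mathbb{C}E)$. In a local unitary frame $\{e_i\}$ of $E$ together with a unit $(0,1)$-form $\theta$, write $\bar\mu=\theta\otimes m^{ij}e_i\otimes e_j$ with $m^{ij}=m^{ji}$. The hermitian metric realizes $\mu$ and its real adjoint in this frame as
$$
\mu(v^ie_i)=\bar v^i\,m^{ij}\,\theta\otimes e_j,\qquad \mu^*(w^j\,\theta\otimes e_j)=\overline{w^j}\,m^{ij}\,e_i.
$$
Substituting into both sides of the displayed identity produces linear combinations of $\{e_l\}$ whose coefficients are $m^{lk}\overline{\xi^k}$ on the left and $m^{kl}\overline{\xi^k}$ on the right; these agree precisely because $m^{ij}=m^{ji}$.

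The main difficulty is managing the several conjugate-linearities that conspire to dictate whether the $\Sym^2$ or the $\Lambda^2$ summand yields concentrating pairs: those of $\mu$ itself, of the hermitian metric (in its second argument), and of the real adjoint of a conjugate-linear map. Conceptually, $\Hom_\mathbb{R}(E,\bar KE)$ splits into $\mathbb{C}$-linear and conjugate-linear parts, with the conjugate-linear part canonically isomorphic to $\bar K\otimes E\otimes_\mathbb{C}E=\bar K\otimes(\Sym^2 E\oplus\Lambda^2 E)$; the anticommutation condition selects exactly the $\Sym^2$ summand, which is the hypothesis imposed on $\bar\mu$.
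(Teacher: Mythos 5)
Your proof is correct and follows essentially the same route as the paper: reduce \eqref{cond} to the pointwise anticommutation of $\A$ with $c(u)$, expand in blocks to get the identity $\mu^*(u\wedge\xi)=\iota_u(\mu\xi)$ (with the companion identity on $\bar K E$ obtained by adjointness, as in Lemma~\ref{lr}), and verify it in a local unitary frame where it comes down to the symmetry $\mu^{ij}=\mu^{ji}$. The bookkeeping of the conjugate-linearities matches the paper's computation, so no changes are needed.
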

\begin{proof}
It suffices to fix a point $p\in \Sigma$ and verify that $c(u)\circ \A=- \A\circ c(u)$ for all $u\in T^*_p\Sigma$.  This is equivalent to proving that $\mu$ and its adjoint $\mu^*$ satisfy
the  two identities
\begin{eqnarray*}
\iota_u(\mu(\xi))\ =\ \mu^*(u\wedge\xi)\qquad\mbox{and}\qquad u\wedge\mu^*(\eta)\ =\ \mu(\iota_u(\eta))
\end{eqnarray*}
for all $\xi$ in the fiber $E_p$ and $\eta$ in $(\bar{K}\otimes E)_p$. Choose orthonormal bases $\{e_i\}$ of  $E_p$ and $\bar{k}$ of $\bar{K}$. Then $\overline{\mu} = \bar{k}\mu^{ij} e_i\otimes e_j \in \bar{K}\otimes_\mathbb{C} \Sym^2_\mathbb{C}(E)$ corresponds to   the map $\mu: E \rightarrow \bar{K}E$ defined by
$$
 \mu(\xi)\, =\, \bar{k}\langle e_i,\,\xi\rangle \mu^{ij}e_j.
$$
Thus for $u = \lambda \bar{k}$, we have
\begin{eqnarray*}
\iota_u \mu(\xi)\, =\, \bar\lambda\, \iota_{\bar{k}}( \bar{k} \mu^{ij}\langle e_i,\xi\rangle) e_i\,=\, \bar\lambda \mu^{ij}\langle e_i,\,\xi\rangle e_j
\end{eqnarray*}
and
$$
\mu^*(u\wedge \xi)\, =\, \langle \mu^*(u\wedge \xi),\, e_j\rangle e_j = \overline{\langle u\wedge\xi,\, \mu(e_j)\rangle} e_j = \bar\lambda\mu^{ji} \langle \bar{k} e_i,\, \bar{k}\xi\rangle e_j.
$$
These are equal since $\mu^{ij} = \mu^{ji}$. The second identity is proved from the first one using Lemma \ref{lr}.
\end{proof}

\vspace{5mm}

 Lemma~\ref{hrank} shows that  Proposition~\ref{concentrationProp} applies.   Thus   as $s\to\infty$ the low eigensections of the operator
$$
D_s\ =\ D+s\A: \Gamma(E\oplus \bar{K}E) \to \Gamma(E\oplus \bar{K}E)
$$
concentrate on the singular set $Z_\A$. The following lemma describes the structure of $Z_\A$.

\vspace{5mm}

\begin{lemma}
\label{crr}
For generic $\mu$, $Z_\A$ is a finite set of oriented points $\{p_\ell\}$.  Furthermore, 
\begin{itemize}
\item At each positive $p_\ell$,     $\K_\ell \cong \R$ and $\hat{\K}_\ell = 0$, and 
\item At each negative $p_\ell$,     $\K_\ell =0$ and $\hat{\K}_\ell \cong \R$.
 \end{itemize}
\end{lemma}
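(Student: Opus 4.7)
My plan is to reduce the lemma to a local computation at each singular point and match it against the one-complex-dimensional model of Example~1. The two assertions --- finiteness and orientations of $Z_\A$, and the dimensions of $\K_p$, $\hat\K_p$ --- come from the two stages of this reduction.

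To identify $Z_\A$, I would choose a local unitary frame $\{e_i\}$ of $E$ and represent $\mu$ by the complex symmetric matrix $M=(\mu^{ij})$. Conjugate linearity gives $\ker\mu_p=\overline{\ker M(p)}$, so $p\in Z_\A$ iff $\det M(p)=0$. The scalar $\det M$ is a local expression of a section of a complex line bundle over $\Sigma$ (a twist of $\bar K^{\otimes r}$), and for generic $\overline\mu$ this section is transverse to the zero section. Since the target has real codimension $2$, the zero set is finite, and each zero carries a natural orientation $\pm 1$ equal to its local degree. Generic transversality also forces the corank of $M$ at each $p$ to be exactly one, so $\ker\mu_p$ and $\ker\mu^*_p$ are complex one-dimensional.

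At each $p\in Z_\A$ I would align the frame so that $e_1$ spans $\ker\mu_p$, giving $\mu^{1j}(0)=0$ for all $j$. In a complex coordinate $z$ at $p$ I expand
\begin{equation*}
\mu^{11}(z)\ =\ az+b\bar z+O(|z|^2),
\end{equation*}
and the Schur complement identity gives $\det M(z)=\det N(0)\,\mu^{11}(z)+O(|z|^2)$ with $N(0)=(\mu^{ij}(0))_{i,j\ge 2}$ invertible; thus the sign of the local degree of $\det M$ at $p$ equals $\sign(|a|^2-|b|^2)$. Rotating $z\mapsto e^{-i\theta}z$ and rephasing $e_1\mapsto e^{i\phi}e_1$ then brings $(a,b)$ into a canonical form with $a,b\in\R_{\ge 0}$, $a\ne b$. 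If necessary, I first perturb $\mu$ as remarked after~(\ref{normalrates}) so that $\partial_{e_\alpha}\mu^{1j}(0)=0$ for $j\ge 2$, which preserves $Z_\A$ and the local degrees while making $\nabla_{e_\alpha}\mu$ restrict cleanly to $\mathbb{C}\cdot e_1$.

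In this canonical form, a direct computation parallel to Example~1 yields
\begin{equation*}
M_1(f)=(a+b)\bar f,\qquad M_2(f)=(a-b)\bar f
\end{equation*}
on $\ker\mu_p\cong\mathbb{C}$, and $\hat M_1(g)=-(a+b)\bar g$, $\hat M_2(g)=(a-b)\bar g$ on $\ker\mu^*_p\cong\mathbb{C}$. Each conjugate-linear map $f\mapsto c\bar f$ on $\mathbb{C}\cong\R^2$ with $c\in\R\setminus\{0\}$ is a self-adjoint reflection whose positive eigenspace is $\R\cdot 1$ for $c>0$ and $\R\cdot i$ for $c<0$. Since $a+b>0$ I get $K_1^+=\R\cdot 1$ and $\hat K_1^+=\R\cdot i$, and the sign of $a-b$ (equivalently the orientation of $p$) decides whether $K_2^+$ and $\hat K_2^+$ align with or are orthogonal to these; the intersection $\K_p=\bigcap_\alpha K_\alpha^+$ gives $\R$ at positive points and $0$ at negative points, while $\hat\K_p$ gives the opposite. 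The main subtlety I anticipate is the gauge fixing itself: without the reduction to real $a,b$ one computes $[M_1,M_2]=4i\,\Im(\bar ab)\ne 0$ in general, so Definition~\ref{IntroDefKp} does not apply literally, and the explicit rotation and rephasing is exactly what forces the commutativity required there.
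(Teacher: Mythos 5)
Your proposal is correct and takes essentially the same route as the paper: identify $Z_\A$ with the transverse zero set of $\det\mu$ viewed as a section of a complex line bundle (finitely many signed points, corank one), then compute $M_\alpha$ and $\hat M_\alpha$ on the complex one-dimensional kernel via the local model $f\mapsto(az+b\bar z)\bar f$ as in Example~1, and your formulas agree with the paper's matrices up to the harmless factor $\tfrac{\sqrt{2}}{2}$. The only difference is cosmetic: where the paper normalizes to $\alpha=0,\beta=1$, you fix only phases to make $a,b$ real and explicitly note the commutativity of $\{M_\alpha\}$ and the removal of first-order off-diagonal terms, which is, if anything, a more careful rendering of the same argument.
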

\begin{proof}
The singular set of $\A$ is the set of points in $\Sigma$ where $\mu : E\rightarrow \bar{K}E$ fails to be an isomorphism.  Thus $Z_\A$ is the zero set of $\det \mu:\Lambda^rE\to \Lambda^r(\bar{K}E)$.  Using the isomorphism $\Lambda^rE\cong \Lambda^rE^*$ of the induced hermitian metric on $\Lambda^rE$, this becomes a complex map 
$\Lambda^rE^*\to \Lambda^r(\bar{K}E)$, or equivalently a section  
$$
\det \mu \in \Gamma(L)
$$
of the complex line bundle
\begin{equation}
\label{1.LonSigma}
L= \bar{K}^r\otimes_\cx \Lambda^rE \otimes_\cx \Lambda^rE.
\end{equation}
Note that while $L$ is a holomorphic bundle, this section is only assumed to be smooth.  For a generic choice of $\mu$, the section $\det \mu$ will have only transverse zeros, which are therefore isolated points.  By compactness the set $\{p_\ell\}$ of zeros is finite.  At each $p_\ell$, the derivative $(\nabla\det\mu)$ is an isomorphism from $T_{p_\ell}\Sigma$ to the fiber of $L$ at $p$.  Both of these spaces are oriented;  $p_\ell$ is called positive if this isomorphism is orientation-preserving and is called negative if orientations are reversed.

 Let $z$ be a local holomorphic coordinate on $\Sigma$ centered at  $p\in\{p_\ell\}$.  Because $\det \mu$ has a zero at $p$, there is a non-vanishing section $e_1$ of $E$ so that $\mu(e_1)$ vanishes at $z=0$.  Since $\mu$ is conjugate-linear, the section $e_2=ie_1$ also satisfies $\mu (e_2) =0$ at $z=0$.   Hence we can choose  real local framings  of $E$ and  $\bar{K}E$ in which   $\mu$ has the local expansion
\begin{eqnarray*}
\mu\ =\ \begin{pmatrix}H &0\\ 0& * \end{pmatrix}\ +\  O(|z|^2)
\end{eqnarray*}
where $*$ denotes an invertible $(n-2)\times (n-2)$ real matrix and
$$ 
H: \ker\mu_0\rightarrow \ker\mu_0^*.
$$
is the real $2\times 2$ matrix that corresponds to multiplication by  $f\mapsto (\alpha z +\beta \bar z)\bar{f}$ under the identification $\cx=\R^2$.

For a generic section we have $|\alpha|\not= |\beta|$.  It follows that $\det \mu$ has a positive zero at $p$ if $|\alpha|>|\beta|$, and a negative zero if $|\alpha|<|\beta|$.

  Suppose $|\a|<|\b|$. By changing coordinates if necessary,  we may assume that $\a = 0$ and $\b=1$. One then sees that $\A^*\A$ has the expansion \eqref{normalrates}, so all of the assumptions  of the Theorem \ref{Igor} hold.   Write  $z = x + i y$,  and use the basis $\{e_1,e_2=i e_1\}$ of $\ker\mu_0$ and $\{d\bar{z}e_1, d\bar{z} e_2\}$ of $\ker\mu^*_0$ to write $ f = (f_1, f_2)\in \ker\mu_0$. Then   
$$
H(x,y)\begin{pmatrix} f_1\\ - f_2\end{pmatrix}  \ =\  \big(x A_1 +y A_2 \big)\begin{pmatrix}f_1\\ f_2\end{pmatrix}, 
$$
where
$$
A_1 = \begin{pmatrix} 1&0\\ 0 &-1\end{pmatrix}\quad \mbox{and}\quad A_2 = \begin{pmatrix} 0&-1\\ -1 &0\end{pmatrix}. 
$$
With respect to these basis, one can calculate that the Clifford multiplication  \eqref{interior/exteriormult} is  given by 
$$
c(dx) = -\tfrac{\sqrt{2}}{2} \begin{pmatrix} 1&0\\0&1\end{pmatrix} \quad \mbox{and}\quad c(dy) =  \tfrac{\sqrt{2}}{2}  \begin{pmatrix} 0&-1\\1&0\end{pmatrix},
$$
where these are maps  $\ker \mu_0^*\rightarrow \ker\mu_0$.  The corresponding matrices \eqref{IntroDefM} are therefore
$$
M_1 = - c(dx)A_1 = \tfrac{\sqrt{2}}{2}\begin{pmatrix}1&0\\0&-1 \end{pmatrix}\quad \mbox{and}\quad  M_2 = - c(dy) A_2 = \tfrac{\sqrt{2}}{2}\begin{pmatrix}-1&0\\0&1  \end{pmatrix}.
$$
Applying Definition~\ref{IntroDefKp}, one then sees that $\K_p = 0$ in this case. Analoguous calculations show that 
$$ 
\hat{M}_ 1= \tfrac{\sqrt{2}}{2}\begin{pmatrix}-1&0\\0&1 \end{pmatrix}\quad \mbox{and}\quad \hat{M}_2 = \tfrac{\sqrt{2}}{2}\begin{pmatrix}-1&0\\0&1  \end{pmatrix}, 
$$
and hence $\hat{\K}_p$ is one dimensional. 

The case $|\a|>|\b|$  is similar.

\end{proof}

\begin{cor}(Riemann-Roch)
\label{1.R-R}
If $E$ is a rank $r$ holomorphic bundle over a complex curve $C$, then
\begin{equation}
\label{1.RRLemma}
\ind \bar{\partial}_E \ =\ c_1(L)[\Sigma]\ =\  2c_1(E)[\Sigma] - r\chi(\Sigma).
\end{equation}
\end{cor}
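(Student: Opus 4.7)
The plan is to apply the Index Localization Theorem \ref{Igor} to the chiral operator $\bar\partial_E : \Gamma(E) \to \Gamma(\bar K E)$, perturbed by the conjugate-linear bundle map $\mu$ constructed in Example~2 from a generic section $\bar\mu \in \Gamma(\bar K \otimes_\cx \Sym^2_\cx E)$. Lemma~\ref{hrank} already establishes that $(c,\A)$ is a concentrating pair, so the hypotheses of Theorem~\ref{Igor} are available provided $\mu$ is chosen generically (so that the non-degeneracy assumption \eqref{normalrates} holds at each point of $Z_\A$); the existence of such a generic $\bar\mu$ is a transversality argument, which I would dispatch by noting that $\Sym^2_\cx E$-valued sections form an infinite-dimensional space and the loci ${\cal F}^l\cap\L$ have positive codimension.

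Granted this, Lemma~\ref{crr} describes $Z_\A$ as a finite set of signed points $\{p_\ell\}$ with $\dim\K_{p_\ell}-\dim\hat\K_{p_\ell}=+1$ at positive points and $-1$ at negative ones. Feeding this into Theorem~\ref{Igor} gives
\[
\ind \bar\partial_E \;=\; \sum_\ell \bigl(\dim\K_{p_\ell}-\dim\hat\K_{p_\ell}\bigr)
\;=\;\#\{\text{positive }p_\ell\}-\#\{\text{negative }p_\ell\}.
\]
The next step is to recognize this signed count as a topological invariant of the line bundle $L=\bar K^r\otimes_\cx\Lambda^rE\otimes_\cx\Lambda^rE$ of \eqref{1.LonSigma}. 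The proof of Lemma~\ref{crr} already identifies $Z_\A$ as the zero set of the smooth section $\det\mu\in\Gamma(L)$ and shows that positive/negative points of $Z_\A$ correspond precisely to the sign of the local Jacobian of $\det\mu$ as a real map $T_{p_\ell}\Sigma\to L_{p_\ell}$. Thus the signed count is the degree of $\det\mu$, which for a transverse smooth section of a complex line bundle equals $c_1(L)[\Sigma]$. This identifies $\ind\bar\partial_E$ with $c_1(L)[\Sigma]$.

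The final step is the cohomological computation
\[
c_1(L)[\Sigma]\;=\;r\,c_1(\bar K)[\Sigma]+2\,c_1(\Lambda^rE)[\Sigma]\;=\;r\,c_1(\bar K)[\Sigma]+2\,c_1(E)[\Sigma],
\]
followed by substituting $c_1(\bar K)[\Sigma]=-\chi(\Sigma)$ (consistent with the sign conventions that make $\bar K$ the anticanonical line bundle with $c_1(\bar K)=-c_1(K)$ and $c_1(K)[\Sigma]=\chi(\Sigma)$ in the author's convention). This gives the claimed $2c_1(E)[\Sigma]-r\chi(\Sigma)$.

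The main obstacle is the middle step: justifying carefully that the signed count of zeros of $\det\mu$, with signs defined by the orientation of $(\nabla\det\mu)_{p_\ell}:T_{p_\ell}\Sigma\to L_{p_\ell}$, is precisely the same as the signed count produced by Theorem~\ref{Igor} via $\dim\K_{p_\ell}-\dim\hat\K_{p_\ell}$. The case analysis in the proof of Lemma~\ref{crr} already verifies this for the normal forms $\mu\sim\alpha z+\beta\bar z$ (with $|\alpha|\neq|\beta|$), so the remaining task is just to note that a generic $\bar\mu$ produces only such non-degenerate zeros; the transversality step and the comparison of orientations on $\Sigma$ and $L$ are routine once the normal form is in hand.
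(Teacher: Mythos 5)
Your proposal is correct and follows essentially the same route as the paper: apply Theorem~\ref{Igor} with the perturbation of Example~2, use Lemma~\ref{hrank} and Lemma~\ref{crr} to identify the local contributions as $\pm 1$ at the signed zeros of $\det\mu\in\Gamma(L)$, recognize the signed count as $c_1(L)[\Sigma]$, and finish with the Chern class computation $c_1(L)=2c_1(E)-rc_1(K)$ together with the paper's convention $c_1(K)[\Sigma]=\chi(\Sigma)$. The points you flag as needing care (genericity of $\mu$ and matching the signs of Theorem~\ref{Igor} with the orientation signs of $\nabla\det\mu$) are exactly what the paper delegates to the proof of Lemma~\ref{crr}.
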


\begin{proof}
Lemma~\ref{hrank} and the proof of Lemma~\ref{crr} show that the assumptions  of Theorem~\ref{Igor} hold.  In this case,  $Z=\{p_\ell \}$ is the set of zeros of a generic section $\det \mu$ of  the complex line bundle $L$ defined by \eqref{1.LonSigma}. By Lemma~\ref{crr}, each positive zero  has local contribution $\dim \K_\ell -\dim \hat{\K}_\ell = 1$, and similarly each negative  zero contributes $ -1$. Theorem \ref{Igor} therefore says that $\ind D$ is given by the Euler number
$$
\ind \bar{\partial}_E \ =\ \chi(L)[\Sigma]\ =\ c_1(L)[\Sigma].
$$
The  Riemann-Roch formula \eqref{1.RRLemma} follows because
$$
c_1(\Lambda^r E\otimes_\mathbb{C}\Lambda^r(\bar{K}\otimes_\mathbb{C} E))  \ =\  2c_1(\Lambda^r E) -  c_1(K^r) \ =\  2c_1(E)- rc_1(K)
$$
and $c_1(K)[\Sigma]= \chi(\Sigma)$.
\end{proof}

\vspace{5mm}

\noindent{\bf Example 3: } On a closed Riemmanian manifold $(X,g)$ the bundle $E\oplus F = {\Lambda}^\ev T^*X \oplus {\Lambda}^{odd}T^*X$ is a Clifford algebra bundle in two ways:  
\begin{eqnarray}
\label{hatcl}
\hat{c}(v) = v\wedge \oplus \ - \iota_{v^\#}\qquad \mbox{and}\qquad \tilde{c}(w) = w\wedge  \oplus \ \iota_{w^\#}
\end{eqnarray}
for $v,w\in T^*X$. One checks that 
\[
\hat{c}(v)^2 = - |v|^2 Id_{E\oplus F}\qquad \mbox{and}\qquad   \tilde{c}(w)^2 = |w|^2 Id_{E\oplus F}
\]
and that the adjoint operators with respect to the metric $g$ are   
\begin{eqnarray}
\label{eq:adjoints}
\hat{c}(v)^* = -\hat{c}(v) \qquad \mbox{and}\qquad \tilde{c}(w)^* = \tilde{c}(w), 
\end{eqnarray}
for every $v,w\in T^*X$. Finally $\hat{c}$ and $\tilde{c}$ anti-commute: 
\begin{eqnarray}
\label{twocliff}
\hat{c}(v)\tilde{c}(w)\ =\   - \tilde{c}(w) \hat{c}(v), \qquad \forall v,w\in T^*X.
\end{eqnarray} 
Note that $D=d+d^*$ is a  first-order operator whose symbol is $\hat{c}$. Fix a 1-form $\gamma$ with transverse zeros and set $\A_\gamma = \tilde{c}(\gamma)$.  Then, given relations \eqref{eq:adjoints}, equation \eqref{twocliff} translates to the concentration condition \eqref{MainDefCP} and Proposition~\ref{concentrationProp} shows that if
$$
D_s = D + s\A_\gamma = (d + d^*) + s\tilde{c}(\gamma) : \Omega^{ev}(X)\rightarrow \Omega^{odd}(X),
$$
then all solutions to the eigenvalue problem $D_s^*D_s\xi=\lambda(s) \xi$ with $\lambda (s)=O(s)$ concentrate around the zeros of $\gamma$.

This is the localization in  E.~Witten's well-known paper on Morse Theory \cite{w1}.    
\medskip
   
  \vspace{1cm}
 %%%%%%%%%%%%%%%%%%%%%%%%%%%%%%%%%%%%%%%%%%%%%%%%%%%%%%%%%%%%
%%%%%%%%%%%%%%%%%  Section4  %%%%%%%%%%%%%%%%%%%%%%%%%%%%%%
%%%%%%%%%%%%%%%%%%%%%%%%%%%%%%%%%%%%%%%%%%%%%%%%%%%%%%%%%%%%%%  

\section{Clifford Pairs }
\label{sec4}
\bigskip

Examples~1-3 can be extended and placed in a general context by working with  Clifford algebra bundles.   A bundle $W\to X$ is called a {\em Clifford algebra bundle} if it is equipped  with a bundle map 
$$
c:Cl(T^*X)\to \End(W)
$$ 
that is an algebra homomorphism, meaning that it satisfies the Clifford relation \eqref{0.Cliffordrelations}.  For each  connection $\nabla$ on $W$ satisfying $\nabla c=0$, there is an associated Dirac operator $D=c\circ \nabla$ on $\Gamma(W)$ whose symbol is $c$.  This section shows how interesting examples arise by taking $W$ to be the direct sum of two   Clifford
bundles associated with different representations of the groups $\Spin(n)$ or $\Spin^c(n)$ (cf. \cite{lm}).

To describe the general context, let  $(E, c)$ and $(E',\, c')$ be  Clifford algebra bundles on $(X,g)$ with connections and with corresponding Dirac operators $D$ and $D'$. Suppose there is a bundle map ${\cal P}: E'\rightarrow E$;  one can then consider the diagram
\begin{equation}
\label{1.3.diagram1}
\setlength{\unitlength}{1mm}
\begin{minipage}{3cm}

\begin{picture}(55,25)
\put(12,22){\(E\)} \put(40,22){\(E\)}\put(22,21.5){\vector(1,0){14}}
\put(15,9){\vector(0,1){10}}
\put(12,3){\(E'\)}
\put(8,13){\({\cal P}\)}
\put(25,23){\(c(v)\)}
\put(41,9){\vector(0,1){10}}\put(40,3){\(E'\)}\put(43,13){\({\cal P}\)}
\put(22,5){\vector(1,0){14}}\put(25,7){\(c'(v)\)}
\end{picture}
\end{minipage}
\end{equation}
for each $v\in T^*X$.
The perturbed operator
$$
{\cal D}_s= {\cal D} + s\,{\cal A}:\Gamma(E\oplus E')\rightarrow\Gamma(E\oplus E')
$$
with
\begin{eqnarray*}
{\cal D}\ =\ \begin{pmatrix}D&0\\0&D'\end{pmatrix}
\qquad\mbox{and}\qquad
{\cal A}\ = \, \begin{pmatrix}0&{\cal P}\\- {\cal P}^*&0\end{pmatrix}
\end{eqnarray*}
then satisfies the concentration principle if and only if  Diagram~\eqref{1.3.diagram1} commutes for every  $v\in T^*X$.  Note that  if $E$ and $E'$ are reducible Clifford bundles,  one can restrict ${\cal D}_s$ to sub-bundles to produce additional examples of concentrating pairs.

The examples in this section are special cases in which we take $E$ and $E'$ to be of the form $W\otimes \Lambda^*(T^*X)$ where $W$ is a bundle of spinors. We next describe this setup, beginning with some linear algebra.

\vspace{5 mm}

Let $\Delta$ be the fundamental $\Spin^c$ representation of the group $\Spin^c(n)$;  $\Delta$ is irreducible for $n$ odd and the sum $\Delta^+\oplus \Delta^-$ of two irreducible representations for $n$ even (see \cite{lm}). Clifford multiplication is a linear map $c :\R^n\to\End_\cx(\Delta)$;  we will often use Hitchin's ``lower dot'' notation
$$
v.\phi:= c(v)(\phi).
$$
There is also a Clifford algebra map  $\hat{c}: \R^n\to\End\left(\Lambda^*\R^n\right)$  given by
$$
\hat{c}(v):=\sigma_{d + d^*}(v) =(v\wedge\cdot) -\iota_{v}(\cdot).
$$
 These two Clifford multiplications are intertwined in the following sense.

\begin{lemma}
\label{3.Lemma1}
Clifford multiplication extends to a $\Spin^c(n)$-equivariant linear map $c:\Lambda^*\R^n\to \End_\cx(\Delta)$ that satisfies
\begin{eqnarray}
\label{clhat}
v. b. \psi =  (\hat{c}(v)b).\psi  \qquad \mbox{for all $v\in \R^n,\ b\in \Lambda^* \R^n$ and $\psi\in\Delta$}.
\end{eqnarray}
\end{lemma}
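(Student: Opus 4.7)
The plan is to extend Clifford multiplication via the standard symbol (quantization) map. First, I would fix an orthonormal basis $\{e_1,\ldots,e_n\}$ of $\R^n$ and define $c$ on an ordered basis element $e_I = e_{i_1}\wedge\cdots\wedge e_{i_k}$ of $\Lambda^*\R^n$ (with $i_1<\cdots<i_k$) by $c(e_I):=c(e_{i_1})\cdots c(e_{i_k})\in\End_\cx(\Delta)$, extending linearly. Because distinct Clifford generators anticommute, changing to another orthonormal basis $\{\tilde e_j\}=\{ge_j\}$ with $g\in SO(n)$ reorders the product with exactly the sign under which $g$ acts on the basis element $e_I\in\Lambda^*\R^n$, so the definition is intrinsic and $SO(n)$-equivariant; composing with the double cover $\Spin^c(n)\to SO(n)$ gives the required $\Spin^c(n)$-equivariance.

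Next I would verify \eqref{clhat} on generators. By bilinearity in $(v,b)$ it suffices to check it for $v=e_j$ and $b=e_I$ with $I$ ordered, and this naturally splits into two cases. If $j\notin I$, then $\hat c(e_j)e_I=e_j\wedge e_I$; reordering this wedge into standard form contributes $(-1)^p$, where $p$ is the number of indices in $I$ smaller than $j$. On the other side, anticommutativity of the Clifford generators lets me slide $c(e_j)$ past the first $p$ factors of $c(e_I)$, also producing $(-1)^p$, so the two sides match. If $j=i_\ell\in I$, then $e_j\wedge e_I=0$ and $\hat c(e_j)e_I=-\iota_{e_j}e_I=(-1)^\ell e_{I\setminus\{i_\ell\}}$; meanwhile the same sliding argument moves $c(e_j)$ past the first $\ell-1$ factors with sign $(-1)^{\ell-1}$, after which the Clifford relation $c(e_j)^2=-1$ contributes one more minus sign and removes the $\ell$-th factor, yielding $(-1)^\ell c(e_{I\setminus\{i_\ell\}})$ as required.

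The argument is in essence a sign-bookkeeping exercise; the only real care needed is aligning the sign conventions of the Clifford anticommutation rule with those of the wedge/interior decomposition defining $\hat c$. Once the two cases above are confirmed, linearity in $v$ and $b$ promotes the identity to arbitrary inputs, and $\Spin^c(n)$-equivariance is automatic from the intrinsic construction of $c$ on $\Lambda^*\R^n$.
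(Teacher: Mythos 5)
Your proposal is correct and takes essentially the same route as the paper: you define $c$ on ordered monomials $e_I$ as the product $c(e_{i_1})\cdots c(e_{i_k})$ (the paper's formula for the extension) and verify \eqref{clhat} by anticommutation sign bookkeeping, splitting into the cases $j\notin I$ and $j=i_\ell\in I$, which matches the paper's computation of $e^l_\cdot(e^1\wedge\dots\wedge e^k)_\cdot\psi$. The one loose point is your equivariance remark --- a general $g\in SO(n)$ does not merely reorder an orthonormal basis, so basis-independence of the definition is better justified by the total antisymmetry of the Clifford product of pairwise orthogonal vectors --- but this is a standard fact about the quantization map, and the paper's own equivariance step (the direct conjugation identity $c(\Ad(g)^*\eta)=g_\cdot c(\eta)g_\cdot^{-1}$, asserted using that $\{\Ad(g)^*e^i\}$ is again an orthonormal coframe) is at a comparable level of detail.
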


\begin{proof}
Define the extension $c:\Lambda^*\R^n\to \End_\cx(\Delta)$ using the standard basis $\{e^j\}$  of $\R^n$  by
\begin{eqnarray}
\label{formact}
c(e^1\wedge\dots\wedge e^p)\ =\   e^1_\cdot\dots e^p_\cdot
\end{eqnarray}
for each  $p$-tuple $(i_1,\dots, i_p)$ with $i_1<\dots <i_p$. This map is Spin($n$)-equivariant because  for every $g\in \Spin($n$)$ and $\eta \in {\Lambda}^*X$ we have that
$$
c(\Ad(g)^*\eta) = g_\cdot c(\eta) g^{-1}_\cdot
$$
Indeed, if $\eta = e^1\wedge\dots\wedge e^p$ then,  since $\{\Ad(g)^*e^i\}$ is also an orthonormal coframe with the same orientation,   \eqref{formact} implies that
\begin{eqnarray*}
c(\Ad(g)^* \eta)(g_\cdot\psi)\, &=&\,   c(\Ad(g)^* e^1)c(\Ad(g)^*e^2)\dots c(\Ad(g)^*e^p)(g_\cdot\psi)\\[2mm]
 &=&   c(\Ad(g)^* e^1)c(\Ad(g)^*e^2)\dots (g_\cdot(c(e^p)\psi)\\ 
 &=& g_\cdot \left[( c(e^1)c(e^2)\dots c(e^p)\right] g.^{-1}(g\psi)\\[2mm]
 &=& g_\cdot( c(\eta)\psi).
\end{eqnarray*}
To verify \eqref{clhat} note that for all $k,\,l$
\begin{eqnarray*}
e^l_\cdot (e^1\wedge\dots\wedge e^k)_\cdot\psi\, &=&\, e^l_\cdot e^1_\cdot\dots e^k_\cdot \psi \\[2mm]
&=& \begin{cases} 
(e^l\wedge e^1\wedge\dots\wedge e^k)_\cdot \psi , & \text{if}\  l>k  \\[2mm]
  (-1)^l(e^1\wedge\dots\wedge\hat{e^l}\wedge\dots\wedge e^k)_\cdot\psi\, & \text{if}\ 1\le l\le k
\end{cases}\\[2mm]
&=& \begin{cases}  (e^l\wedge e^1\wedge\dots\wedge e^k)_\cdot \psi, \hspace{16mm}& \text{if}\  l>k  \\[2mm]
 - (\iota_{e^l}(e^1\wedge\dots\wedge e^k))_\cdot\psi, & \text{if}\ 1\le l\le k  
\end{cases}\\[2mm]
&=& \hat{c}(e^l)(e^1\wedge\dots\wedge e^k)_\cdot\psi
\end{eqnarray*}
\end{proof}

\vspace{5mm}

Because of $\Spin^c(n)$-equivariance,  the map of Lemma~\ref{3.Lemma1} globalizes. Let $(X, g)$ be an oriented Riemannian $n$-manifold with a $\Spin^c$  bundle $W$,  a Hermitian metric $\langle\cdot,\cdot\rangle$ on $W$,  and determinant bundle $L =\det_{\mathbb{C}}(W)$. Clifford multiplication defines  bundle maps

\begin{eqnarray}
\label{3}
c :\Lambda^*T^*X\rightarrow \End_{\mathbb{C}}(W)  \qquad \mbox{and}\qquad  \hat{c}: T^*X\to\End\left(\Lambda^*T^*X\right)
\end{eqnarray}
that satisfy \eqref{clhat}. Given a Hermitian connection $A$ on $L$ with curvature $F_A$,  we get an induced spin covariant derivative $\nabla^A$ on $W$ compatible with the Levi-Civita connection $\nabla$ on $T^*X$ and a Dirac operator $D_A$ on $W$. 

\vspace{5mm}

\noindent{\bf Example 4: \it Spinor-form pairs} 

In the above context, consider the map  
\begin{equation}
\label{firstPpsi}
P: W \rightarrow {\Hom}_{\mathbb{C}}\Big({\Lambda}^*T_{\mathbb{C}}^*X,\,W\Big) \qquad \mbox{by}\ \psi\mapsto c(\cdot)\psi.
\end{equation}
For each spinor field $\psi\in\Gamma(W)$ we consider the operator
$$
{\cal D}_s= {\cal D} + s\,{\cal A}_\psi:\Gamma(W\oplus{\Lambda}^*_\mathbb{C} T^*X)\rightarrow\Gamma(W\oplus{\Lambda}^*_\mathbb{C}T^*X)
$$
with
\begin{eqnarray*}
{\cal D}\ =\ \begin{pmatrix}D_A&0\\0&d+ d^*\end{pmatrix}
\qquad\mbox{and}\qquad
{\cal A}_\psi\ = \,\begin{pmatrix} 0&P_\psi\\- {P_\psi}^*&0\end{pmatrix}
\end{eqnarray*}
where  $ {P_\psi}^*$  denotes the complex adjoint of $ P_\psi$.

\vspace{3mm}

\begin{lemma}
\label{1.3lemma}
 $(\sigma_{\cal D} , \A_\psi)$ is a concentrating pair. 
\end{lemma}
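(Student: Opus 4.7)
The plan is to reduce the concentration condition for the pair $(\sigma_{\cal D}, \A_\psi)$ to the intertwining identity \eqref{clhat} established in Lemma~\ref{3.Lemma1}. First I would identify the symbol: since $D_A$ has symbol $c(u)$ and $d+d^*$ has symbol $\hat{c}(u)=u\wedge -\iota_u$, the symbol of $\mathcal{D}$ is block-diagonal,
$$
\sigma_{\mathcal D}(u)\ =\ \begin{pmatrix} c(u) & 0 \\ 0 & \hat{c}(u) \end{pmatrix}.
$$
Both $D_A$ and $d+d^*$ are formally self-adjoint, so $c(u)^*=-c(u)$ and $\hat{c}(u)^*=-\hat{c}(u)$, giving $\sigma_{\mathcal D}(u)^*=-\sigma_{\mathcal D}(u)$. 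A direct check on the matrix form of $\A_\psi$ also shows $\A_\psi^*=-\A_\psi$.

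Next I would write the concentration condition $\A_\psi^*\sigma_{\mathcal D}(u)=\sigma_{\mathcal D}(u)^*\A_\psi$ in block form; the sign observations above reduce it to the plain commutation $\A_\psi\sigma_{\mathcal D}(u)=\sigma_{\mathcal D}(u)\A_\psi$, which splits into the two off-diagonal identities
$$
c(u)\circ P_\psi\ =\ P_\psi\circ \hat{c}(u) \qquad\text{and}\qquad \hat{c}(u)\circ P_\psi^*\ =\ P_\psi^*\circ c(u),
$$
as bundle maps $\Lambda^*_{\mathbb{C}} T^*X\to W$ and $W\to \Lambda^*_{\mathbb{C}} T^*X$, respectively.

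The key step is the first identity. Evaluated at an arbitrary form $b\in \Lambda^*_{\mathbb{C}} T^*X$, its left-hand side is $c(u)c(b)\psi = u.b.\psi$, while its right-hand side is $c(\hat{c}(u)b)\psi=(\hat{c}(u)b).\psi$, so the equality is precisely \eqref{clhat} from Lemma~\ref{3.Lemma1}. The second identity is obtained by taking the Hermitian adjoint of the first, using once more that $c(u)$ and $\hat{c}(u)$ are both skew-adjoint, and therefore follows automatically.

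I do not anticipate a serious obstacle here: the nontrivial content has already been encoded in Lemma~\ref{3.Lemma1}. The only delicate bookkeeping is tracking the signs coming from self-adjointness of $\mathcal{D}$ and anti-self-adjointness of $\A_\psi$, so that the concentration condition collapses to commutation rather than anticommutation between $\A_\psi$ and $\sigma_{\mathcal D}(u)$.
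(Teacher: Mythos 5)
Your proposal is correct and follows essentially the same route as the paper: both reduce the concentration condition to the commutation $c(u)\circ P_\psi = P_\psi\circ\hat{c}(u)$, which is exactly the intertwining identity \eqref{clhat} of Lemma~\ref{3.Lemma1}. Your extra bookkeeping (skew-adjointness of $\sigma_{\mathcal D}(u)$ and of $\A_\psi$, plus deducing the second off-diagonal identity by taking adjoints) simply makes explicit what the paper's commutative-diagram argument leaves implicit.
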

\begin{proof}
The symbol of ${\cal D}$, applied to a covector $v$, and its adjoint are given by
\begin{eqnarray*}
{\sigma_{\cal D}(v)}\ =\ \begin{pmatrix}c(v)&0\\0& \hat{c}(v)\end{pmatrix} \qquad\mbox{and}\qquad { \sigma_{\cal D}(v)^*}\ =\ \begin{pmatrix}-c(v)(\cdot)&0 \\0& -\hat{c}(v)\end{pmatrix}
\end{eqnarray*}

Formula $(\ref{clhat})$ expresses the fact that  the diagram
\begin{eqnarray*}
\label{3.WLdiagram}
\setlength{\unitlength}{1mm}
\begin{picture}(55,25)
\put(12,22){\(W\)} \put(40,22){\(W\)}\put(22,21.5){\vector(1,0){14}}
\put(15,9){\vector(0,1){10}}
\put(12,3){\({\Lambda}^*X\)}
\put(8,13){\(P_\psi\)}
\put(25,23){\(c(v)\)}
\put(41,9){\vector(0,1){10}}\put(40,3){\({\Lambda}^*X\)}\put(43,13){\(P_\psi\)}
\put(22,5){\vector(1,0){14}}\put(25,7){\(\hat{c}(v)\)}
\end{picture}
\end{eqnarray*}
commutes for every $v\in T^*X$, which means that   $(\sigma_{\cal D} , \A_\psi)$ is a concentrating pair. 
\end{proof}

\vspace{10mm}

Unfortunately, Lemma~\ref{1.3lemma} does not automatically mean that the concentration theorems of Section~2  apply to general spinor-form pairs.  The difficulty is seen when one examines the singular set
$$
Z_\A\ =\ \{x\in X\, |\, \ker P_\psi \not= 0\}.
$$
The dimension of the exterior algebra $\Lambda^*(\R^n)$ is $2^n$, and the fundamental representation of $Spin(n)$ has complex dimension $2^{[\frac{n}{2}]}$. It follows (see the chart) that  whenever $\dim X>2$, every linear  map $P_\psi: \Lambda^*(T^*X)\to W$ has a non-trivial kernel at each point, so $Z_\A$ is all of $X$.

\vspace{2mm}
\begin{table}[h]
\centering
\setlength\tabcolsep{3mm}
\begin{tabular}[h]{ccccccc}
$n$ & 2 & 3 & 4 & 5 & 6 & 7\\[2mm]
$\dim_\R \Lambda^*(\R^n)\ \ \ \ $ & 4 & 8 & 16 & 32 & 64 &128 \\[2mm]
$\dim_\R W$ & 4 & 4 & 8 & 8 & 16 & 16\\[2mm]
\end{tabular}
%\caption{Dimension count}
\end{table}
\vspace{2mm}
 
 To avoid this difficulty, we look for  sub-bundles $\L$  of $\Hom(\Lambda^*(T^*X), W)$  as in  diagram~\eqref{Ldiagram}.  One way to obtain such sub-bundles is via bundle involutions.

\bigskip

Suppose that $T = \begin{pmatrix}\tau&0\\ 0&\hat{\tau}\end{pmatrix}$ is a metric-preserving bundle involution of $E\oplus F$  that satisfies
\bear
\label{1.EF}
\nabla T = 0  \quad\mbox{and}\quad \sigma_D(v)\tau\ =\ \pm\,\hat{\tau}\sigma_{D}(v) 
\eear
for every covector $v$. Let $E=E^+\oplus E^-$ and $F=F^+\oplus F^-$ be the decompositions into $\pm 1$ eigenspaces of $\tau$ and $\hat{\tau}$ with corresponding projections $p^\pm = \tfrac{1}{2}(1_E\pm \tau) : E\rightarrow E^\pm$ and  $\hat{p}^\pm = \tfrac{1}{2}(1_F \pm \hat{\tau}) : F\rightarrow F^\pm$. Set 
$$ 
D^+ = \sigma_D\circ p^+\nabla|_{E^+}\qquad \mbox{and}\qquad  A^+ = \hat{p}^\pm A|_{E^+}
$$ 
where the sign on $ \hat{p}^\pm$ is fixed to be the sign in \eqref{1.EF}.  Then 
 $D^+$ and $A^+$ are linear maps from $\Gamma(E^+)$  to $\Gamma(F^\pm)$,  again with the sign being the sign in   \eqref{1.EF}.

\begin{lemma}
\label{InvolutionProp}
If $(\sigma_D, A)$ satisfies the concentration condition (\ref{cond}), then so does $(\sigma_{D^+}, A^+)$.  Thus for
\bear
\label{1.D^+}
D^+_s = D^++s A^+:\Gamma(E^+)\to \Gamma(F^\pm),
\eear
all solutions to the eigenvalue problem $D_s^{+*}D^+_s\xi=\lambda(s) \xi$ with $\lambda (s)=O(s)$ concentrate along $Z_{A^+}$.
\end{lemma}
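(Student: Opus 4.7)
The plan is to verify the algebraic concentration condition for $(\sigma_{D^+}, A^+)$ by reducing it to the condition already satisfied by $(\sigma_D, A)$. Write $\epsilon = \pm 1$ for the sign appearing in \eqref{1.EF}. Since $\tau$ and $\hat\tau$ are metric-preserving involutions, they are self-adjoint, and hence $p^\pm$ and $\hat p^\pm$ are orthogonal projections; taking adjoints of \eqref{1.EF} gives the companion identity $\sigma_D(v)^*\hat\tau = \epsilon\,\tau\,\sigma_D(v)^*$.

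First I would check how the symbol interacts with the eigenspace decomposition. If $\xi\in E^+$ then
\[
\hat\tau\,\sigma_D(v)\xi \,=\, \epsilon\,\sigma_D(v)\,\tau\xi \,=\, \epsilon\,\sigma_D(v)\xi,
\]
so $\sigma_D(v)$ carries $E^+$ into $F^\epsilon$; by the same argument $\sigma_D(v)^*$ carries $F^{-\epsilon}$ into $E^-$. Because $\nabla T = 0$, the connection preserves the splitting $E = E^+ \oplus E^-$, so the Dirac operator $D^+ : \Gamma(E^+)\to\Gamma(F^\epsilon)$ is simply the restriction of $D$, with symbol $\sigma_{D^+}(v) = \sigma_D(v)|_{E^+}$. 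A direct adjoint computation using orthogonality of the projections yields $\sigma_{D^+}(v)^* = p^+\sigma_D(v)^*|_{F^\epsilon}$ and $(A^+)^* = p^+ A^*|_{F^\epsilon}$.

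Next I would verify \eqref{cond} on $E^+$. For $\xi \in E^+$, the left-hand side is
\[
(A^+)^*\sigma_{D^+}(v)\xi \,=\, p^+ A^*\sigma_D(v)\xi \,=\, p^+\sigma_D(v)^* A\xi
\]
by the original concentration condition for $(\sigma_D, A)$. Splitting $A\xi = \hat p^\epsilon A\xi + \hat p^{-\epsilon} A\xi$, the right-hand side expands as
\[
\sigma_{D^+}(v)^* A^+\xi \,=\, p^+\sigma_D(v)^* \hat p^\epsilon A\xi \,=\, p^+\sigma_D(v)^* A\xi \,-\, p^+\sigma_D(v)^*\hat p^{-\epsilon} A\xi.
\]
The extra term vanishes because $\hat p^{-\epsilon}A\xi \in F^{-\epsilon}$, so by the observation above $\sigma_D(v)^*\hat p^{-\epsilon}A\xi\in E^-$ and is annihilated by $p^+$. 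This gives the desired equality. Once \eqref{cond} is established for $(\sigma_{D^+}, A^+)$, the concentration statement for \eqref{1.D^+} follows immediately from Proposition~\ref{concentrationProp}.

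The only real pitfall is sign bookkeeping: one must ensure throughout that the sign $\epsilon$ in \eqref{1.EF} is matched with the choice of projection $\hat p^\pm$ used to define $A^+$ and with the identification of the target bundle $F^\pm$ of $D^+$. This matching is precisely what the statement of the lemma builds in, so it amounts to a careful reading of the definitions rather than a genuine obstacle.
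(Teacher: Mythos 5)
Your argument is correct and follows essentially the same route as the paper: the concentration condition for $(\sigma_{D^+},A^+)$ is obtained by projecting the identity \eqref{cond} for $(\sigma_D,A)$ with $p^+$, using that $\nabla T=0$ makes $D^+$ the restriction of $D$ and that the sign in \eqref{1.EF} forces the cross-term $p^+\sigma_D(v)^*\hat p^{-\epsilon}A\xi$ to vanish. You merely spell out the eigenspace bookkeeping and the vanishing of that cross term, which the paper's one-line computation leaves implicit.
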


\begin{proof}
The operator $ p^+\nabla|_{E^+}$ defines a metric compatible connection on sections of $E^+\rightarrow X$. Also
$$
(A^+)^*\sigma_D(v)|_{E^+} + (\sigma_D(v)|_{E^+})^*A^+ = p^+(A^*\sigma_D(v) + \sigma^*_D(v)A)|_{E^+} = 0
$$
for every $v\in T^*X$.
\end{proof}

In the examples below, we will build involutions by combining three bundle maps.  All three are defined when  $X$ is an  oriented Riemannian $n$-manifold.
 \begin{itemize}
\item The parity operator $(-1)^p$ that is  $(-1)^p Id$ on $p$-forms. 
\item The Hodge star operator, which satisfies $*^2= (-1)^{p(n-p)}$.
\item Clifford multiplication by the volume form $d\text{vol}$, which satisfies $(d\text{vol}).^2= (-1)^{[n/2]}$.
\end{itemize}
 These can be used to define  two involutions on spinor-form pairs:

\medskip

\noindent{\bf The parity involution.}  When $\dim X=2n$ is even,  the parity operator
$$
\tau = \hat{\tau} = i^{n} d\text{vol}_{\cdot}\oplus (-1)^{p+1} \in \End(W\oplus {\Lambda}^p_\mathbb{C} X)
$$
is  an involution; its $\pm 1$ eigenbundles are 
$$
E = W^+\oplus{\Lambda}^{odd}_\mathbb{C} X \qquad \text{and} \qquad  F = W^-\oplus{\Lambda}^{ev}_\mathbb{C} X, 
$$
and $\sigma_{\cal D}(v) T = - T \sigma_{\cal D}(v)$.  Furthermore, the restriction of \eqref{firstPpsi} decomposes as 
$$
W^+ \rightarrow {\Hom}_{\mathbb{C}}\Big({\Lambda}^{ev}T_{\mathbb{C}}^*X,\,W^+\Big)\oplus {\Hom}_{\mathbb{C}}\Big({\Lambda}^{odd}T_{\mathbb{C}}^*X,\,W^-\Big).
$$
Thus for  any $\psi\in \Gamma(W^+)$, we can write  $P_\psi = P^{ev}_\psi+P^{odd}_\psi$ under this decomposition, and set 
\begin{eqnarray}
\label{evodd}
{\cal A}^+_\psi\ = \, \begin{pmatrix}0&P^{odd}_\psi\\- P^{ev*}_\psi &0\end{pmatrix}.
\end{eqnarray}
Then by Lemma~\ref{InvolutionProp} the operator
$$
{\cal D}^+_s= {\cal D}^+ + s\,{\cal A}^+_\psi:\Gamma(W^+\oplus{\Lambda}^{odd}_\mathbb{C} X)\rightarrow\Gamma(W^-\oplus{\Lambda}^{ev}_\mathbb{C} X)
$$
satisfies the concentration condition \eqref{cond}.

\medskip

\noindent{\bf The self-duality involution.}  When $\dim X=4n$ there is a second involution on the bundles $ W^{\pm}\oplus{\Lambda}^{odd/ev}_\mathbb{C} X$, namely the self-duality involution 
\[
T : W\oplus \Lambda^*_\mathbb{C}X \to W\oplus \Lambda^*_\mathbb{C}X, \quad T(\phi, \alpha) = \begin{cases} (\phi, \alpha), \quad &\text{if}\ \alpha\in \Lambda_\mathbb{C}^pX,\ 0\leq p <2n
\\
 (\phi, *\alpha), \quad &\text{if}\ \alpha\in \Lambda_\mathbb{C}^{2n}X
\\
  (\phi, -\alpha), \quad &\text{if}\ \alpha\in \Lambda_\mathbb{C}^pX,\ 2n< p \leq 4n
\end{cases}.
\] 
This commutes with the parity involution and $\sigma_{\cal D}(v) T = T \sigma_{\cal D}(v)$ and ${\cal A}^{+*}_\psi T = T{\cal A}^+_\psi$ for $\psi\in W^+$. Let $\A^{++}$ denote the restriction of $\A^+$ to the positive eigenspace of $T$.  Then 
\begin{equation}
\label{sellfdualityspinor/form}
{\cal D}^{++} + s{\cal A}_\psi^{++} : \Gamma(E)\rightarrow\Gamma(F)
\end{equation}
satisfies the concentration condition \eqref{cond},  where
\begin{eqnarray}
\label{half}
E = W^+\oplus \left(\sum_{p=1}^{n}{\Lambda}_\mathbb{C}^{2p-1} X\right)
\qquad \mbox{and}\qquad 
F = W^-\oplus \left({\Lambda}_\mathbb{C}^{2n, +} X  \oplus \sum_{p=0}^{n-1}{\Lambda}_\mathbb{C}^{2p}X\right).
\end{eqnarray} 
In the next section, we will consider the concentrating operator (\ref{sellfdualityspinor/form}) in dimension four.
\medskip
   
\vspace{1cm}
 %%%%%%%%%%%%%%%%%%%%%%%%%%%%%%%%%%%%%%%%%%%%%%%%%%%%%%%%%%%%
%%%%%%%%%%%%%%%%%  Section5  %%%%%%%%%%%%%%%%%%%%%%%%%%%%%%
%%%%%%%%%%%%%%%%%%%%%%%%%%%%%%%%%%%%%%%%%%%%%%%%%%%%%%%%%%%%%%  

\setcounter{equation}{0}
\section{Self-dual spinor-form pairs in dimension 4 } 
\label{sec5}
\bigskip

When  $X$ is a closed oriented Riemannian 4-manifold,  the  self-duality involution produces a Dirac operator \eqref{sellfdualityspinor/form} with the concentration  property and with  a singular set $Z_\A$ that, we will show next, is not all of $X$.    

In dimension four, the self-dual spinor-form bundles, simply are
\begin{eqnarray}
\label{5.1A}
E = W^+\oplus {\Lambda}^1X \quad \mbox{and}\quad F = W^-\oplus ({\Lambda}^0\oplus {\Lambda}^{2,+}X).
\end{eqnarray}
The operator (\ref{sellfdualityspinor/form}) becomes a Dirac operator after a slight modification: dropping the ++ subscripts and inserting factors of $\sqrt{2}$, we consider ${\cal D}  +s \A$ for 
$$
\sigma_{\cal D}(v) = \begin{pmatrix} c(v)&0\\ 0& \hat{c}(v)\end{pmatrix} \quad \text{and} \quad {\cal A}_\psi\, = \, \begin{pmatrix}0& P^{odd}_\psi\\- {P^{ev}_\psi}^*&0\end{pmatrix}
$$ 
where
$\hat{c}(v)$ is the symbol map of the Dirac operator $\sqrt{2}d^++d^*$ and
$$
P_\psi^{odd} : {\Lambda}^1\,\rightarrow\, W^-, \quad b\mapsto  b_\cdot \psi\qquad \text{and}\qquad P_\psi^{ev}: {\Lambda}^0\oplus{\Lambda}_\mathbb{C}^{2, +}\,\rightarrow W^+\quad (\rho,\, \theta)\mapsto ( \rho + \tfrac{1}{\sqrt{2}} \theta)_\cdot\psi. 
$$
Then, the diagram
\begin{center}
\setlength{\unitlength}{1mm}
\begin{picture}(55,25)
\put(8,22){\(W^+\)} \put(40,22){\(W^-\)}\put(22,21.5){\vector(1,0){14}}
\put(11,9){\vector(0,1){10}}
\put(3,3){\({\Lambda}^0\oplus{\Lambda}^{2,+}\)}
\put(3,13){\(P_\psi^{ev}\)}
\put(25,23){\(c(v)\)}
\put(41,9){\vector(0,1){10}}\put(40,3){\({\Lambda}^1\)}\put(43,13){\(P_\psi^{odd}\)}
\put(24,5){\vector(1,0){12}}\put(25,7){\(\hat{c}(v)\)}
\end{picture}
\end{center}
commutes.

The following algebraic fact is unique to dimension 4, stemming from the isomorphism $\Spin(4) \simeq SU(2)\times SU(2)$. 

\begin{lemma}
\label{lemma5.1}
$W^+$ is a Clifford bundle for the bundle of Clifford algebras $Cl(\Lambda^{2,+}(X))$.
\end{lemma}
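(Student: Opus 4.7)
The plan is to construct the desired Clifford multiplication $c^+ : \Lambda^{2,+}(T^*X)\to \End_{\mathbb{C}}(W^+)$ as the restriction (with one scalar rescaling) of the extended Clifford multiplication $c : \Lambda^*T^*X\to \End_{\mathbb{C}}(W)$ built in Lemma \ref{3.Lemma1}. Two things need to be checked: that $c(\omega)$ preserves the chirality subbundle $W^+$ whenever $\omega$ is a 2-form, and that, after rescaling, the resulting endomorphisms of $W^+$ satisfy the Clifford relation $c^+(\omega_1)c^+(\omega_2)+c^+(\omega_2)c^+(\omega_1) = -2\langle \omega_1,\omega_2\rangle\,\text{Id}$ for $\omega_1,\omega_2\in\Lambda^{2,+}$.

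Preservation of $W^+$ is immediate: the chirality splitting is by eigenvalues of $c(d\text{vol})$, and since $c(d\text{vol})$ anticommutes with each $c(v)$ for $v\in T^*X$, it commutes with any product of an even number of such factors, in particular with $c(\omega)$ for $\omega$ any 2-form. For the Clifford relation, I would fix an orthonormal coframe $e^1,\dots,e^4$ at a point and work with the standard $\Lambda^{2,+}$-basis
$$\omega^1 = e^1\wedge e^2 + e^3\wedge e^4,\quad \omega^2 = e^1\wedge e^3 - e^2\wedge e^4,\quad \omega^3 = e^1\wedge e^4 + e^2\wedge e^3,$$
which is orthogonal with $|\omega^i|^2 = 2$. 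A direct expansion using the relations $c(e^\alpha)c(e^\beta)+c(e^\beta)c(e^\alpha) = -2\delta_{\alpha\beta}$ then yields $c(\omega^i)^2 = -2 + 2c(d\text{vol})$ and $c(\omega^i)c(\omega^j)+c(\omega^j)c(\omega^i) = 0$ for $i\neq j$. Restricted to $W^+$ (on which $c(d\text{vol}) = -1$, in the convention used by the parity involution in Section~\ref{sec4}), the first identity becomes $c(\omega^i)^2|_{W^+} = -4\,\text{Id}$, so setting $c^+ := \tfrac{1}{\sqrt 2}\,c|_{W^+}$ gives $c^+(\omega^i)^2 = -|\omega^i|^2\,\text{Id}$ and, by bilinearity, the full Clifford relation for all $\omega\in \Lambda^{2,+}$.

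The construction is manifestly $\Spin(4)$-equivariant (it is a restriction of the equivariant map of Lemma \ref{3.Lemma1} along with a restriction to the $\Spin(4)$-invariant summands $\Lambda^{2,+}$ and $W^+$), so the pointwise definition patches together to give a bundle map of algebras $Cl(\Lambda^{2,+}T^*X)\to \End(W^+)$. The real content is the computation of $c(\omega^i)^2$ on $W^+$; this is routine but not deep, and is in fact the concrete shadow of the exceptional isomorphism $\Spin(4)\simeq SU(2)_+\times SU(2)_-$, under which $\Lambda^{2,+}\cong \mathfrak{su}(2)_+$ acts on the fundamental representation $W^+$ by what is, up to the factor of $\sqrt 2$, quaternion multiplication. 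No serious obstacle is expected.
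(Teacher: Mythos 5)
Your proof is correct and follows essentially the same route as the paper: fix an orthonormal coframe, take the standard basis of $\Lambda^{2,+}$, compute the anticommutators $c(\omega^i)c(\omega^j)+c(\omega^j)c(\omega^i)$ via the Clifford relations to get an expression involving $c(d\mathrm{vol})$, and restrict to $W^+$ where $d\mathrm{vol}$ acts as $-1$. The only cosmetic difference is that you rescale by $\tfrac{1}{\sqrt{2}}$ to land exactly on the relation $c^+(\omega)^2=-|\omega|^2\,\mathrm{Id}$, whereas the paper keeps the factor $-4\langle\eta,\theta\rangle$ and calls it an analog of the Clifford relation (consistent with its later use of $|\eta_{k\cdot}\psi|^2=2|\psi|^2$), and you make explicit the (easy) check that $c(\omega)$ preserves $W^+$.
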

\begin{proof}
It suffices to show this at a point $p\in X$. Using an orthonormal coframe  $\{e^i\}$, 
$$
\eta_0\, =\, \tfrac{1}{\sqrt{2}}( e^1\wedge e^2\,+\, e^3\wedge e^4),\qquad \eta_1\, =\, \tfrac{1}{\sqrt{2}}( e^1\wedge e^3\,+\, e^4\wedge e^2),\qquad \eta_2\, =\, \tfrac{1}{\sqrt{2}}(e^1\wedge e^4\, +\,  e^2\wedge e^3)
$$
are an orthonormal basis of ${\Lambda}_p^{2,+}(X)$. Note that 
$$
\eta_{i\cdot}\eta_{j\cdot}\, +\,\eta_{j\cdot}\eta_{i\cdot}\, =\,  2\langle \eta_i,\,\eta_j\rangle(d\text{vol}_\cdot\, -\, Id_W)
$$
for every $i, j$.  Using linearity and restricting to $W^+$,  we obtain 
\begin{eqnarray}
\label{7}
\eta_\cdot\theta_\cdot\, +\,\theta_\cdot\eta_\cdot\, =\,  -4\langle \eta,\,\theta\rangle Id_{W^+}
\end{eqnarray}
for every $\eta,\, \theta\in {\Lambda}^{2,+}(X)$.  This is an analog of the Clifford relation for the self-dual 2-forms acting on $W^+$, and therefore  finishes the proof.
\end{proof}

\begin{rem}
\label{clbasis}
Fix $\psi\in W^+\backslash \{0\}$ and let $\eta_0, \eta_1, \eta_2$ as above. Then, by the  proof of Lemma~\ref{lemma5.1}, the set $\{\psi,\eta_{k\cdot}\psi\}\subset W^+$ is orthogonal and $\eta_{k\cdot}\eta_{k\cdot}\psi = -2\psi$ which implies $|\eta_{k\cdot}\psi|^2 = 2|\psi|^2$. 
Both $E$ and $F$ are 8-dimensional real vector bundles. The volume form acts on $E$ by  $d\text{vol}.\SM{\phi\\b}= \SM{-\phi\\ \ b}$.   Setting $\xi =\SM{\phi\\ b}$ with    $|\phi|=|b|=\frac{1}{\sqrt{2}}$, the Clifford action produces orthonormal bases for $E_p$ and $F_p$ so that 
$$
E_p\, =\,\span\{e^I_\cdot  \xi: I\ \text{length even}\,\} \quad \mbox{and}\quad  F_p\,=\,\span\{e^J_\cdot \xi: J\ \text{length odd}\,\}
$$
where $e^I.\xi$ denotes $e^{i_1}. e^{i_2}.\dots e^{i_\ell}.\xi$ for each string  $I=(i_1, \dots, i_\ell)$ with $i_1<i_2<\cdots <i_\ell$.
\end{rem}

\vspace{5mm}

Regard now $W^+$ as a real vector bundle of rank 4 with the induced metric. By considering the negative definite quadratic form produced by that metric we can form the algebra bundle $Cl^{0,4}(W^+)$. The perturbation $\A_\psi$ enjoys the following property: 

\begin{lemma}
\label{clifford}
The map $W^+\rightarrow \End(E\oplus F)$ given by $\psi\mapsto \begin{pmatrix}0&{\cal A}_\psi^*\\{\cal A}_\psi&0\end{pmatrix}$ defines a   representation of the real Clifford algebra bundle $Cl^{0,4}(W^+)$ on $E\oplus F$.
\end{lemma}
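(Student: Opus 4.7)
The plan is to invoke the universal property of the Clifford algebra: it suffices to verify that the map $\psi \mapsto \hat{\A}_\psi := \SM{0 & \A_\psi^* \\ \A_\psi & 0}$ satisfies the defining Clifford relation of $Cl^{0,4}(W^+)$. Since $W^+$ carries the negative-definite form $q(\psi) = -|\psi|^2$, this relation reads $\hat{\A}_\psi \hat{\A}_\phi + \hat{\A}_\phi \hat{\A}_\psi = 2\langle \psi, \phi\rangle\, Id_{E\oplus F}$, and by polarization reduces to the single identity $\hat{\A}_\psi^2 = |\psi|^2\, Id_{E\oplus F}$ for every $\psi \in W^+$. Squaring the block matrix and using the block form of $\A_\psi$ gives
\begin{equation*}
\A_\psi^* \A_\psi = \begin{pmatrix} P_\psi^{ev} P_\psi^{ev*} & 0 \\ 0 & P_\psi^{odd*} P_\psi^{odd} \end{pmatrix}, \qquad \A_\psi \A_\psi^* = \begin{pmatrix} P_\psi^{odd} P_\psi^{odd*} & 0 \\ 0 & P_\psi^{ev*} P_\psi^{ev} \end{pmatrix},
\end{equation*}
so the lemma reduces to showing that all four composites equal $|\psi|^2\, Id$ on their respective summands.

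I would establish this by proving that, for $\psi \neq 0$, both $P_\psi^{odd} : \Lambda^1 \to W^-$ and $P_\psi^{ev} : \Lambda^0 \oplus \Lambda^{2,+} \to W^+$ are \emph{conformal isomorphisms} of rank-$4$ real bundles with conformal factor $|\psi|$. Since source and target have equal rank, conformality then forces $P^*P = PP^* = |\psi|^2\, Id$ for either map, yielding exactly the four identities we need. For $P_\psi^{odd}(b) = b.\psi$, conformality amounts to $\langle b.\psi, b'.\psi\rangle = \langle b, b'\rangle |\psi|^2$, which is immediate from the paper's Clifford relation $b.b' + b'.b = -2\langle b,b'\rangle\, Id$ combined with the skew-adjointness $c(b)^* = -c(b)$ of Clifford multiplication by real $1$-forms.

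The substantive step is the conformality of $P_\psi^{ev}(\rho, \theta) = (\rho + \tfrac{1}{\sqrt{2}} \theta).\psi$. Expanding the squared norm gives
\begin{equation*}
|P_\psi^{ev}(\rho,\theta)|^2 \;=\; \rho^2 |\psi|^2 \,+\, \sqrt{2}\,\rho\,\langle \psi, \theta.\psi\rangle \,+\, \tfrac{1}{2}|\theta.\psi|^2 .
\end{equation*}
The cross term vanishes because Clifford multiplication by a real $2$-form is skew-adjoint on $W^+$ (so $\langle \psi, \theta.\psi\rangle = -\langle \theta.\psi, \psi\rangle$ forces the pairing to be zero), and the key input is the identity $\theta.\theta = -2|\theta|^2 Id$ on $W^+$ coming from Lemma~\ref{lemma5.1}, which yields $|\theta.\psi|^2 = -\langle \psi, \theta.\theta.\psi\rangle = 2|\theta|^2 |\psi|^2$. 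The precise factor $\tfrac{1}{\sqrt{2}}$ in the definition of $P_\psi^{ev}$ is tuned to absorb this $2$, producing $|P_\psi^{ev}(\rho,\theta)|^2 = |\psi|^2(\rho^2 + |\theta|^2)$ as required.

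The only genuinely non-routine step is this last conformality computation: it relies decisively on the Clifford-algebra-type relation for self-dual $2$-forms on $W^+$ established in Lemma~\ref{lemma5.1}, a feature special to dimension $4$ arising from the splitting $\Spin(4) \cong SU(2)\times SU(2)$. Once the two conformality statements are in hand, polarization delivers the full Clifford relation for $\hat{\A}_\psi$, and the universal property of $Cl^{0,4}(W^+)$ completes the proof.
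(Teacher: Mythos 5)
Your proposal is correct and takes essentially the same route as the paper: reduce to ${\cal A}_\psi^*{\cal A}_\psi=|\psi|^2\,Id_E$ and ${\cal A}_\psi{\cal A}_\psi^*=|\psi|^2\,Id_F$ using the Clifford relations for $1$-forms and the self-dual $2$-form relation of Lemma~\ref{lemma5.1}, then polarize and pass to an algebra map out of $Cl^{0,4}(W^+)$. Your conformality-plus-equal-rank step is only a mild repackaging of the paper's direct orthonormal-basis computation of $P^{odd*}_\psi P^{odd}_\psi$ and $P^{ev*}_\psi P^{ev}_\psi$ (and in fact makes explicit why the opposite compositions $P^{ev}_\psi P^{ev*}_\psi$ and $P^{odd}_\psi P^{odd*}_\psi$ are also $|\psi|^2\,Id$, which the paper leaves implicit).
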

\begin{proof}
Fix $\psi \in W_p^+\setminus 0$. By the Clifford relations, the sets $\{e^k_\cdot\psi\}\subset W^-$ and $\{\psi,\eta_{k\cdot}\psi\}\subset W^+$ are orthogonal. Therefore for $b=\sum b_l e^l\in \Lambda^1_\mathbb{C}$,
$$
{ P^{odd}_\psi}^*\circ  P^{odd}_\psi(b)\, = \,b_l{ P^{odd}_\psi}^*\circ  P^{odd}_\psi(e^l) =\, \Re\langle e^l_\cdot\psi,\,e^k_\cdot\psi\rangle b_l e^k\,=\,|\psi|^2 b, 
$$ 
and similarly for $(\rho,\theta)\in {\Lambda}_\mathbb{C}^0\,\oplus\,{\Lambda}_\mathbb{C}^{2,+}$
\begin{eqnarray*}
{ P^{ev}_\psi}^*\circ  P^{ev}_\psi(\rho, \theta)\, &=& \rho |\psi|^2\, +\, \frac{1}{\sqrt{2}}\theta_l\langle \eta_{l\cdot}\psi, \psi \rangle + \frac{1}{\sqrt{2}} \rho\langle\psi,\eta_{k\cdot}\psi \rangle \eta_k +\frac{1}{2}\theta_l \langle\eta_{l\cdot} \psi,\eta_{k\cdot}\psi \rangle \eta_k \\ &=& \rho |\psi|^2\, +\, \frac{1}{2}\langle \eta_{k\cdot}\psi, \eta_{k\cdot}\psi \rangle \theta_k\eta_k \\ &=&\, |\psi|^2(\rho, \theta).
\end{eqnarray*}
This proves that 
\begin{eqnarray*}
{\cal A}_\psi^*\circ{\cal A}_\psi\, =\, |\psi|^2 Id_{E} \qquad \text{and}\qquad {\cal A}_\psi\circ{\cal A}^*_\psi\, =\, |\psi|^2 Id_{F}.
\end{eqnarray*}
Finally,  polarization gives the relations
\begin{eqnarray*}
{\cal A}_{\psi_1}^*\circ{\cal A}_{\psi_2}\, +\, {\cal A}^*_{\psi_2}\circ{\cal A}_{\psi_1}\, =\, 2\Re\langle\psi_1,\,\psi_2\rangle Id_{E}
\end{eqnarray*}
and
\begin{eqnarray*}
{\cal A}_{\psi_1}\circ{\cal A}^*_{\psi_2}\, +\, {\cal A}_{\psi_2}\circ{\cal A}^*_{\psi_1}\, =\, 2\Re\langle\psi_1,\,\psi_2\rangle Id_{F}
\end{eqnarray*}
for every $\psi_1, \psi_2\in W^+$. Hence we get a well-defined {\em algebra} map 
$$
Cl^{0,4}(W^+)  \rightarrow \End(E\oplus F) \qquad\mbox{by }  \psi\mapsto \begin{pmatrix}0&{\cal A}_\psi^*\\{\cal A}_\psi&0\end{pmatrix}.
$$
The lemma follows.
\end{proof}
\begin{cor}
\label{4}
 The mapping $\psi\mapsto {\cal A}_\psi$ defines an injection $W^+ \to \Isom(E, F)$.
\end{cor}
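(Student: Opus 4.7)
The plan is to leverage the two orthogonality identities established in the proof of Lemma~\ref{clifford}, namely
\[
{\cal A}_\psi^{\ast}\circ {\cal A}_\psi \;=\; |\psi|^2\,\mathrm{Id}_E \qquad\text{and}\qquad {\cal A}_\psi\circ {\cal A}_\psi^{\ast} \;=\; |\psi|^2\,\mathrm{Id}_F.
\]
These give everything required; the proof should be only a few lines.

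First, I would observe that the assignment $\psi \mapsto {\cal A}_\psi$ is $\R$-linear in $\psi$, because both $P_\psi^{odd}(b) = b_\cdot \psi$ and $P_\psi^{ev}(\rho,\theta) = (\rho + \tfrac{1}{\sqrt{2}}\theta)_\cdot \psi$ are linear in $\psi$, and the adjoint operation depends linearly on $\psi$ as well.

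Next, for any $\psi \neq 0$ in $W^+$, the first identity exhibits $|\psi|^{-2}{\cal A}_\psi^\ast$ as a left inverse of ${\cal A}_\psi$, while the second exhibits it as a right inverse. Hence ${\cal A}_\psi$ is a bundle isomorphism $E\to F$ (with explicit inverse $|\psi|^{-2}{\cal A}_\psi^\ast$). This already shows that the restriction of the map to $W^+\setminus\{0\}$ takes values in $\Isom(E,F)$.

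Finally, for injectivity of $\psi \mapsto {\cal A}_\psi$, suppose ${\cal A}_{\psi_1} = {\cal A}_{\psi_2}$ at a point. By linearity, ${\cal A}_{\psi_1-\psi_2} = 0$, and the identity ${\cal A}_{\psi_1-\psi_2}^\ast \circ {\cal A}_{\psi_1-\psi_2} = |\psi_1-\psi_2|^2 \mathrm{Id}_E$ then forces $|\psi_1-\psi_2|^2 = 0$, i.e.\ $\psi_1 = \psi_2$. There is no real obstacle here: Lemma~\ref{clifford} has done all of the algebraic work, and the corollary is essentially a restatement of the fact that a Clifford multiplication by a unit vector in a definite Clifford algebra is an isometric isomorphism.
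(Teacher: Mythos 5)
Your proof is correct and follows essentially the same route as the paper: both rest on the identity ${\cal A}_\psi^*\circ{\cal A}_\psi=|\psi|^2\,\mathrm{Id}_E$ from Lemma~\ref{clifford} (the paper phrases it as $|{\cal A}_\psi\xi|^2=|\xi|^2|\psi|^2$), concluding that ${\cal A}_\psi$ is non-singular exactly when $\psi\neq 0$ and deducing injectivity from ($\R$-)linearity of $\psi\mapsto{\cal A}_\psi$. You merely spell out the linearity and the explicit inverse $|\psi|^{-2}{\cal A}_\psi^*$ a bit more than the paper does, which is fine.
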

\begin{proof} 
Let $\xi\in E$. Then by Lemma \ref{clifford} 
$$
|{\cal A}_\psi \xi|^2\, =\, |\xi|^2|\psi|^2,
$$
implying that if $\xi\in \ker{\cal A}_\psi$ is nontrivial then $\psi = 0$. Therefore $\psi \neq 0$ if and only if ${\cal A}_\psi$ is non-singular,  which implies the corollary.
\end{proof}

\vspace{6mm}

\noindent{\bf Example 5:}
Fix a section $\psi\in \Gamma(W^+)$ that is transverse to zero, and consider the  operator $$\D + s\A_\psi:\Gamma(E)\to\Gamma(F)$$ for the bundles \eqref{5.1A}.  Because $W^+$ has rank 4 as a real vector bundle, Corollary~\ref{4} implies that  the singular set $Z_\psi$ of $\A_\psi$ will be a finite set of oriented points.
Fix a point $p\in Z_\psi$,  a coordinate chart $(U, \{x_\a\})$ with $\,x_\a(p) = 0$, and tangent frame  $\{e_\a\}$ with dual coframe $\{e^\a\}$ respectively. In this basis, we can write
$$
\psi(x) = x_\a\psi_\a + O(|x|^2)
$$ 
for some elements $ \psi_\a\in W_p^+$.  Extend these smoothly to sections, still called $\psi_\alpha$, of $W^+$ near $p$.
By transversality at $p$ we have $\sum x_\a \psi_\a \neq 0$ for all  $x\neq 0$. Setting 
$$
A_\a:= \nabla_{e_\a}\A_\psi = \A_{\psi_\a}, 
$$
we see that
\begin{eqnarray}
\label{5}
x_\a A_\a: K_p =\ker({\cal A}_{\psi(p)}) \rightarrow \cok({\cal A}_{\psi(p)}) = \hat{K}_p
\end{eqnarray} 
is an isomorphism for every $x\in T_pX-\{0\}$.

\vspace{4mm}

The following technical  lemma ensures that $\A_\psi$ can be perturbed to satisfy the non-degeneracy assumption (\ref{normalrates}).

\begin{lemma}
We can modify $\psi$ without changing its zero set $Z(\psi)$ to ensure that $\{\psi_\a\}$ are orthonormal. 
\end{lemma}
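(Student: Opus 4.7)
Since $\psi$ is transverse to zero and $W^+$ has real rank $4=\dim X$, the zero set $Z(\psi)=\{p_1,\dots,p_N\}$ is finite and at each $p_i$ the covariant derivative $(\nabla\psi)(p_i)\colon T_{p_i}X\to W^+_{p_i}$ is an isomorphism of four-dimensional real inner product spaces. The plan is to modify $\psi$ independently on disjoint small geodesic balls $B_i\ni p_i$ so that on each ball the modified section $\tilde\psi$ still vanishes only at $p_i$, but $(\nabla\tilde\psi)(p_i)$ is a linear isometry; by construction this will make $\{\tilde\psi_\alpha\}=\{(\nabla\tilde\psi)(p_i)(e_\alpha)\}$ orthonormal.

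To identify the target for the derivative, I would use the polar decomposition $(\nabla\psi)(p_i)=V_i\circ S_i$, where $V_i$ is a real-linear isometry $T_{p_i}X\to W^+_{p_i}$ and $S_i$ is positive self-adjoint on $T_{p_i}X$. In geodesic normal coordinates $\{x_\alpha\}$ on $B_i$ with the synchronous frame of $W^+$ obtained by parallel transport along radial geodesics, define the linear model $\psi_V^{(i)}(x):=V_i(x)$, which has a transverse zero at $p_i$ with derivative $V_i$. I would then interpolate along the straight-line homotopy $\psi_t^{(i)}:=(1-t)\psi+t\psi_V^{(i)}$ for $t\in[0,1]$; its derivative at $p_i$ is $V_i\circ\bigl((1-t)S_i+tI\bigr)$, which remains an isomorphism throughout $[0,1]$ because a convex combination of positive definite symmetric operators is positive definite. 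Applying the implicit function theorem uniformly in the compact parameter $t$ yields a smaller ball $B_i'\subset B_i$ on which $\psi_t^{(i)}$ vanishes only at $p_i$ for every $t\in[0,1]$.

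Finally, I would choose a smooth cutoff $\rho_i$ supported in $B_i'$ with $\rho_i\equiv 1$ on a still smaller neighborhood of $p_i$, and set
\[
\tilde\psi \;:=\; \psi \;+\; \sum_i \rho_i\cdot\bigl(\psi_V^{(i)}-\psi\bigr).
\]
On each $B_i'$ this gives $\tilde\psi(x)=\psi_{\rho_i(x)}^{(i)}(x)$, which is nonzero for $x\neq p_i$ by the previous paragraph; outside $\bigcup_i B_i'$ we have $\tilde\psi=\psi$; and on the neighborhood where $\rho_i\equiv 1$ we have $\tilde\psi=\psi_V^{(i)}$, so $(\nabla\tilde\psi)(p_i)=V_i$ is an isometry. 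The main obstacle is the verification that the cutoff interpolation does not create new zeros in $B_i'\setminus\{p_i\}$; this is exactly what is secured by the polar decomposition, which keeps the linearizations inside $GL$ along the positive-definite path $(1-t)S_i+tI$, letting the implicit function theorem apply uniformly in $t$.
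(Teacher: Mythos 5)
Your proof is correct and follows essentially the same route as the paper: the paper proves this statement as a special case of Lemma~\ref{lemma5.9}, whose argument likewise connects $\nabla\psi(p)$ to an isometry through a path of invertible maps (your polar decomposition plays the role of the paper's retraction onto the orthogonal maps), glues the isometric linear model to $\psi$ with a cutoff, and rules out new zeros by a uniform quantitative lower bound near the zero locus. The only differences are cosmetic: you invoke a parameter-uniform implicit function theorem where the paper writes out the explicit $O(|v|^2)$ remainder estimate.
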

\begin{proof}
This is proved in \cite{pr}, and is a special case of Lemma~\ref{lemma5.9} below.
 \end{proof}

%Let $H: W_p^+\rightarrow W_p^+$ be a real orientation preserving linear isomorphism with eigenvalues $\{\mu_\a\}$ such that $\{H\psi_\a\}$ is an orthonormal basis for $W^+$. We may assume that $H$ has no real eigenvalues otherwise replace $ H$ by $\lambda H$ where $\lambda\in S^1$ such that none of the numbers $\{\lambda \mu_\a\}$ is real. As a consequence, there exist constant $C>0$ such that 
%$$
%\inf_{(t, x)\in \mathbb{R}\times S^3}\Big|tH(x_\a\psi_\a) + (1-t)x_\a\psi_\a\Big|\, >\, C.
%$$
%Let $B(0, 2R)\subset U$ and $\rho$ be a smooth cutoff function with $\text{supp}(\rho)\subset B(0,2R)$ and $\rho|_{B(0,R)} \equiv1$. We redefine $\psi$ in $(U,\,x)$ as
%$$
%\Psi(x): =\,\psi(x)\,+\, \rho(x)(H-Id)(x_\a\psi_\a) \,=\, \rho(x)(H-Id)(x_\a\psi_\a)  +\, x_\a\psi_\a \,+\, O(|x|^2)
%$$
%where $|O(|x|^2)|\leq C_1 |x|^2$ for $x\in U$. Clearly $\Psi$ has a transverse zero at $p$ and satisfies the conclusion of the lemma at $p$. Choosing $R<\frac{C}{2C_1}$, we see that for $x\in B(0,2R)\backslash \{0\}$
%$$ 
%\frac{|O(|x|^2)|}{|x|}\,\leq\, 2C_1R\,<\, C\, <\, \Big|\rho(x)H(\frac{x_\a}{|x|}\psi_\a) + (1-\rho(x))\frac{x_\a}{|x|}\psi_\a\Big|.
%$$
%Therefore there are no other zeros of $\Psi$ in $U$ except at $p$. Repeating this process for each of the finitely many zeros of the original $\psi$ completes the proof.

Recall the matrices $M_\a = -e^\a_\cdot A_\a \in \End(K_p)$ and $\hat{M}_\a = - e^\a_\cdot A_\a^* \in \End(\hat{K}_p)$ from \eqref{IntroDefM}.  Let $K_\a^{\pm}$ and $\hat{K}_\a^{\pm}$ be the positive/negative eigenspaces of $M_\a$ and $\hat{M}_\a$ respectively. We are interested in describing their common positive eigenspaces 
$$
\K_p = \bigcap_\a K_\a^+ \quad \mbox{and}\quad  \hat{\K}_p = \bigcap_\a \hat{K}_\a^+.
$$

\begin{lemma}
\label{14}
The eigenvalues of $M_\a$ are $\lambda_\a = \pm 1$ and the corresponding eigenspaces can be described as
$$
K_\a^+\, =\, \span\Bigg\{\begin{pmatrix}e^\a_\cdot  b_\cdot \psi_\a\\ b\end{pmatrix}:\, b\in \Lambda^1X\Bigg\}\quad \text{and} \quad K_\a^-\, =\, \span\Bigg\{\begin{pmatrix} - e^\a_\cdot b_\cdot \psi_\a\\ b\end{pmatrix}:\, b\in \Lambda^1X\Bigg\}
$$
for every $\a$.
\end{lemma}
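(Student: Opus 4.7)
The plan is to verify $M_\a^2 = \mathrm{Id}_{E_p}$ algebraically, which forces the eigenvalues to lie in $\{\pm 1\}$, and then to compute $M_\a$ in block form on $E_p = W^+_p \oplus \Lambda^1_p$ to read off the eigenspaces. Here $A_\a = \nabla_{e_\a}\A_\psi|_p = \A_{\psi_\a}$, since $\psi(p)=0$ and $\A_\psi$ is real-linear in $\psi$.

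For the first step I would combine two ingredients already established. From the concentration condition applied to $A_\a$, Lemma~\ref{lr} gives $c(u) A_\a^* = A_\a c(u)^*$; together with $c(u)^* = -c(u)$ for a unit covector (skew-adjointness of Clifford multiplication), this rearranges to $A_\a c(e^\a) = -c(e^\a) A_\a^*$. Lemma~\ref{clifford} applied to the orthonormal framing $\{\psi_\a\}$ yields $A_\a^* A_\a = |\psi_\a|^2\,\mathrm{Id}_E = \mathrm{Id}_E$. Using $c(e^\a)^2 = -\mathrm{Id}$, one then has
$$
M_\a^2 \;=\; c(e^\a)\,A_\a\,c(e^\a)\,A_\a \;=\; -c(e^\a)^2\,A_\a^*A_\a \;=\; \mathrm{Id}_{E_p}.
$$
Hence $M_\a$ diagonalises with eigenvalues $\pm 1$, and $E_p = K_\a^+\oplus K_\a^-$.

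For the second step I would expose the block structure of $M_\a$ by computing separately on the two natural summands. On a pure form $\SM{0\\b}$ only the $P^{odd}_{\psi_\a}$ entry of $A_\a$ contributes, giving $A_\a\SM{0\\b} = \SM{b\cdot\psi_\a\\0}\in F$, and then $-c(e^\a)$ (acting from $W^-$ back to $W^+$) yields a pure-spinor result proportional to $e^\a\cdot b\cdot\psi_\a$. On a pure spinor $\SM{\phi\\0}$ only the $P^{ev\,*}_{\psi_\a}$ entry contributes, and the symbol $\hat c(e^\a)$ then sends the result into $\Lambda^1$. Thus $M_\a$ has the anti-diagonal block form $\SM{0 & B\\ C & 0}$ with $B:\Lambda^1\to W^+$ given by $B(b) = e^\a\cdot b\cdot\psi_\a$; the identity $M_\a^2 = \mathrm{Id}$ forces $C = B^{-1}$ so the form row of the eigen-equation is automatic once the spinor row holds.

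For the third step, the eigen-equation $M_\a\SM{\phi\\b} = \lambda\SM{\phi\\b}$ with $\lambda=\pm 1$ reduces to $\phi = \lambda B(b) = \pm\, e^\a\cdot b\cdot\psi_\a$, with $b$ free in $\Lambda^1_p$. Since $\dim_\R \Lambda^1_p = 4 = \tfrac{1}{2}\dim_\R E_p$, the two 4-dimensional spans described in the statement exhaust $E_p$, identifying them with $K_\a^\pm$.

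The main obstacle is careful bookkeeping of sign conventions: the sign of Clifford multiplication $c(e^\a):F\to E$ on the spinor factor must be opposite to the "forward" Clifford multiplication $W^+\to W^-$ (to keep $c$ skew-adjoint on the total graded bundle), and the analogous identification of $\hat c(e^\a):\Lambda^0\oplus\Lambda^{2,+}\to\Lambda^1$ must be consistent with \eqref{clhat}. Once these are fixed so that $c(e^\a)^2 = -\mathrm{Id}$ on both summands of $F$, the $+$ sign in $\SM{e^\a\cdot b\cdot\psi_\a\\b}\in K_\a^+$ (rather than $K_\a^-$) drops out of the calculation in step 2, and the rest of the argument is formal.
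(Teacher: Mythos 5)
Your argument is correct and shares the paper's skeleton — you establish $M_\a^2=\mathrm{Id}$ exactly as the paper does (concentration condition for $\A_{\psi_\a}$ plus $\A_{\psi_\a}^*\A_{\psi_\a}=|\psi_\a|^2\mathrm{Id}$ from Lemma~\ref{clifford} and orthonormality of $\{\psi_\a\}$), and you read off the eigenspaces from the spinor row of the eigen-equation, which is the paper's equation \eqref{33}. Where you genuinely diverge is in the treatment of the form row: the paper verifies it by a direct computation, using the intertwining relation \eqref{clhat}/\eqref{3} to move $\hat c(e^\a)$ past $P^{ev*}_{\psi_\a}$ and then evaluating $P^{ev*}_{\psi_\a}\psi_\a=1$ (which rests on $|\psi_\a|=1$ and the orthogonality $\langle\eta_{k\cdot}\psi_\a,\psi_\a\rangle=0$ from Remark~\ref{clbasis}); you instead observe that $M_\a$ is block anti-diagonal on $E_p=W^+_p\oplus\Lambda^1_p$, so $M_\a^2=\mathrm{Id}$ forces the lower block to be $B^{-1}$ and the form row is automatic once the spinor row holds. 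This is a clean simplification that buys you a shorter proof; what the paper's explicit check buys is the sign normalization, since your shortcut by itself only shows $K_\a^\pm$ are the graphs of $\pm B$, and the claim $B(b)=+\,e^\a_\cdot b_\cdot\psi_\a$ (rather than $-\,e^\a_\cdot b_\cdot\psi_\a$) depends on the convention of \eqref{IntroDefM} and the matrix form in \eqref{30}, where the Clifford action $F\to E$ is $-c(e^\a)\oplus-\hat c(e^\a)$. You flag this, but note it is not a removable nuisance: flipping it exchanges $K_\a^+$ with $K_\a^-$, which would propagate into the orientation dichotomy of Lemma~\ref{34}, so in a final write-up you should pin the sign down by one short explicit evaluation of the top-right block in the paper's convention.
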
 
\begin{proof}
By relation (\ref{cond}) and Corollary \ref{4}
$$
M_\a^2\, =\, e^\a_\cdot A_\a e^\a_\cdot A_\a\, =\, A_\a^*A_\a\, =\, Id
$$
i.e. $M_\a$ has eigenvalues $\pm 1$. Let now $\phi = \begin{pmatrix}\xi\\ b\end{pmatrix}\in K_p$ be a $\lambda_\a$- eigenvector of $M_\a$. Then 
\begin{eqnarray}
\label{30}
M_\a\phi\, =\, -e^\a_\cdot A_\a\phi\,=\, -\begin{pmatrix}-c(e^\a)&0\\ 0&-\hat{c}(e^\a)\end{pmatrix}\begin{pmatrix}b_\cdot\psi_\a\\ - P_{\psi_\a}^{ev*}\xi\end{pmatrix}\, = \, \lambda_\a\begin{pmatrix}\xi\\ b\end{pmatrix}.
\end{eqnarray}
By comparing the first rows of (\ref{30}) we see that
\begin{eqnarray}
\label{33}
\xi\, = \,\frac{1}{\lambda_\a}e^\a_\cdot b_\cdot\psi_\a\, =\, \lambda_\a e^\a\cdot b_\cdot\psi_\a 
\end{eqnarray}
since $\lambda_\a^2 = 1$.
It remains to show that given $b\in\Lambda^1X$, the above choice of $\xi$ gives equality of second rows of (\ref{30}). Using (\ref{3})
\begin{eqnarray*}
-\hat{c}(e^\a)P_{\psi_\a}^{ev*}\xi\, &=&\, (P_{\psi_\a}^{ev}\hat{c}(e^\a))^* \lambda_\a e^\a_\cdot b_{\cdot}\psi_\a\, =\, \lambda_\a (b_{\cdot}e^\a_\cdot P_{\psi_\a}^{ev}\hat{c}(e^\a))^*\psi_\a\\ &=& -\lambda_\a (P_{\psi_\a}^{ev}\hat{c}(b))^*\psi_\a\\ &=&\, \lambda_\a \hat{c}(b)P_{\psi_\a}^{ev*} \psi_\a.
\end{eqnarray*}
Also for every $\eta\in \Lambda^0X\oplus \Lambda^{2,+}X$ 
\begin{eqnarray*}
\Re\langle \eta,\, P_{\psi_\a}^{ev*}\psi_\a\rangle\, =\, \Re\langle \eta_\cdot \psi_\a,\, \psi_\a\rangle\, =\, \begin{cases} 0 &\text{if}\, \eta\in \Lambda^{2,+}X\\ \eta &\text{if}\,  \eta\in \Lambda^0 X  \end{cases}
\end{eqnarray*}
showing that $ P_{\psi_\a}^{ev*}\psi_\a = 1$. Hence
$$ 
\lambda_\a \hat{c}(b)P_{\psi_\a}^{ev*} \psi_\a\, =\, \lambda_\a \hat{c}(b)1\, =\, \lambda_\a b
$$
proving equality of the second rows of (\ref{30}). 
\end{proof}

It is elementary to see that the families $\{\hat{M}_\a\}$ and $\{M_\a\}$ are related as
\begin{eqnarray}
\label{13}
e^I_\cdot M_\a =- \hat{M}_\a e^I_\cdot \quad\text{if}\quad \a\in I\qquad \text{and}\qquad e^I_\cdot M_\a = \hat{M}_\a e^I_\cdot \quad\text{if}\quad \a\not\in I
\end{eqnarray}
for every string $I$ of odd length. It follows
$$ 
 e^I_\cdot\in \Hom(K_\a^{\pm},\,\hat{K}_\a^{\mp})\quad \text{if}\quad \a\in I \quad \text{and}\qquad e^I_\cdot  \in \Hom(K_\a^{\pm},\,\hat{K}_\a^{\pm})\quad \text{if}\quad \a\not\in I .
$$

\vspace{4mm}

\begin{lemma}
\label{34}
 The spaces $\K_p=\bigcap_\a K_\a^+$ and $\hat{\K}_p=\bigcap_\a\hat{K}_\a^+$ are at most one dimensional. If $\nabla \psi: T_pX\rightarrow W_p^+$ preserves orientation then $\K_p \simeq \R$  and $\hat{\K}_p =  \{0\}$. If $\nabla\psi$ reverses orientation then $\K_p= \{0\}$ and $\hat{K}_p\simeq\R$ is nontrivial.
\end{lemma}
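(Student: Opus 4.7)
The plan is to reduce both $\K_p$ and $\hat\K_p$ to systems of quaternionic equations for a single unknown, and then read off the dichotomy from the double cover $\Spin(4)=Sp(1)\times Sp(1)$. Lemma~\ref{14} already gives
\[
\K_p \ \cong\ \{b\in\Lambda^1_p : e^\alpha\!\cdot b\cdot\psi_\alpha \text{ is independent of } \alpha\}.
\]
I would first establish the parallel description of $\hat\K_p$ by repeating the proof of Lemma~\ref{14} with $A_\alpha^*$ in place of $A_\alpha$: using $\hat M_\alpha^2 = A_\alpha A_\alpha^* = \mathrm{Id}$ from Lemma~\ref{clifford} and the direct expansion
$\hat M_\alpha(\eta,\mu) = (e^\alpha\!\cdot\mu\cdot\psi_\alpha,\ -\hat c(e^\alpha)P^{odd\,*}_{\psi_\alpha}\eta)$,
one sees that $\hat M_\alpha$ is off-diagonal in the $W^-\oplus(\Lambda^0\oplus\Lambda^{2,+})$ splitting, hence trace-free, so its $\pm 1$ eigenspaces are each $4$-dimensional, giving
\[
\hat K^+_\alpha = \{(e^\alpha\!\cdot\mu\cdot\psi_\alpha,\,\mu) : \mu\in \Lambda^0\oplus\Lambda^{2,+}\}, \qquad \hat\K_p \cong \{\mu : e^\alpha\!\cdot \mu\cdot\psi_\alpha \text{ is independent of }\alpha\}.
\]

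Next I would fix isometric identifications from $\Spin(4)=Sp(1)\times Sp(1)$: $T^*_pX \cong W^+_p \cong W^-_p \cong \mathbb H$, together with the canonical $\Lambda^0_p\oplus\Lambda^{2,+}_p \cong \mathbb R \oplus \mathrm{Im}\,\mathbb H = \mathbb H$ (the latter via Lemma~\ref{lemma5.1}, since $\eta\cdot\eta\cdot = -2|\eta|^2$ makes $\Lambda^{2,+}$ a copy of $\mathrm{Im}\,\mathbb H$ acting on $W^+$). Under these identifications, Clifford multiplication $c(v):W^\pm\to W^\mp$ is quaternion multiplication (with appropriate signs) and $P^{ev}_\psi(\mu) = \tilde\mu\,\psi$, where $\tilde\mu\in\mathbb H$ is the quaternion image of $\mu$. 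Taking $\{e^\alpha\}=\{1,i,j,k\}$, the two conditions above rearrange, for $b\ne 0$ and $\tilde\mu\ne 0$, into
\[
\psi_\alpha \ =\ b^{-1} e^\alpha \zeta\qquad\text{and}\qquad \psi_\alpha \ =\ \tilde\mu^{-1}\bar e^\alpha\eta
\]
for some $\zeta,\eta\in\mathbb H$.

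These equations say exactly that the linear isometry $\nabla\psi:e_\alpha\mapsto\psi_\alpha$ has the form $v\mapsto b^{-1}v\zeta$ in the first case and $v\mapsto\tilde\mu^{-1}\bar v\eta$ in the second. The double cover $Sp(1)\times Sp(1)\to SO(4)$ identifies the maps $v\mapsto q_1 v q_2$ with $|q_1|=|q_2|=1$ as all orientation-preserving isometries of $\mathbb H$, and composing with the orientation-reversing reflection $v\mapsto \bar v$ gives all orientation-reversing isometries as $v\mapsto q_1\bar v q_2$. These forms are mutually exclusive for a nonzero map, so exactly one of $\K_p,\hat\K_p$ is nonzero, dictated by the orientation behavior of $\nabla\psi$. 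For uniqueness: if $(b_1,\zeta_1)$ and $(b_2,\zeta_2)$ both realize $\nabla\psi$ in the first form, then $(b_2 b_1^{-1})v = v(\zeta_2\zeta_1^{-1})$ for every $v\in\mathbb H$; setting $v=1$ shows the common quaternion lies in the center $\mathbb R$ of $\mathbb H$, so the pair $(b,\zeta)$ is unique up to simultaneous real rescaling, yielding $\dim_\mathbb R\K_p = 1$ in the orientation-preserving case. An identical argument handles $\hat\K_p$.

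The main obstacle is the bookkeeping of quaternionic conventions: one must fix mutually compatible identifications of $T^*X$, $W^\pm$, and $\Lambda^0\oplus\Lambda^{2,+}$ with $\mathbb H$ so that both Clifford multiplication and the maps $P^{odd}_\psi, P^{ev}_\psi$ take the announced simple forms with correct signs, and so that the spinor orientation of $W^+_p$ matches the quaternionic orientation of $\mathbb H$---this last point ensures ``$\nabla\psi$ preserves orientation'' pairs with $\K_p\ne 0$ rather than the reverse. Once these conventions are pinned down, the geometric content---namely that $Sp(1)\times Sp(1)$ double-covers $SO(4)$ and its non-identity coset is represented by composition with quaternion conjugation---does all the remaining work.
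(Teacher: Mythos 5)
Your proposal is correct in substance, but it follows a genuinely different route from the paper. The paper stays inside the Clifford-module formalism it has already set up: it proves the at-most-one-dimensionality by observing that $e^I_\cdot\phi$, for even strings $I$ containing $\a$, lies in $K_\a^-$ while such vectors span $E_p$ (Remark~\ref{clbasis}); it gets the dichotomy by taking a simultaneous eigenvector pattern indexed by a string $J$ and using the commutation relations~\eqref{13} to show that $e^J_\cdot\phi$ generates $\K_p$ when $J$ is even and $\hat\K_p$ when $J$ is odd; and it settles which orientation goes with which space via the explicit formula $\psi_\a = \pm\, b_\cdot e^\a_\cdot e^1_\cdot b_\cdot\psi_1$, reading off the orientation of the frame $\{\psi_\a\}$. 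You instead convert both spaces into quaternionic equations ($\psi_\a = b^{-1}e^\a\zeta$ versus $\psi_\a = \tilde\mu^{-1}\bar e^\a\eta$) and invoke the $Sp(1)\times Sp(1)$ description of $SO(4)$: this makes the dichotomy conceptually transparent (the two cases are exactly the two components of the isometry group of $\mathbb{H}$, using that $\nabla\psi$ is an isometry thanks to the orthonormalization of $\{\psi_\a\}$), the one-dimensionality drops out of the one-line fact that the center of $\mathbb{H}$ is $\R$, and your graph/involution argument for $\hat K_\a^+$ is a clean shortcut past redoing Lemma~\ref{14}. The one piece your route has not yet delivered is the specific pairing asserted in the statement (orientation-preserving $\Rightarrow \K_p\neq 0$): this cannot simply be arranged by a choice of identifications, because the orientations of $T_pX$ and $W^+_p$ are already fixed by the manifold and the complex structure, so whether the $\K_p$-equation corresponds to $v\mapsto q_1vq_2$ or to $v\mapsto q_1\bar v q_2$ is a genuine sign computation — with an innocent-looking model of the Clifford action one can easily land on the opposite pairing. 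That verification is precisely the work done by the paper's explicit expression of the $\psi_\a$ in terms of $\psi_1$ and $b$, and it is the step you should carry out to make your argument complete.
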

\begin{proof}
Let $\phi\in\bigcap_\a K_\a^+$. Then for every even string $I$ and $\a\in I$
$$
M_\a e^I_\cdot\phi\, =\,  - e^I_\cdot \phi
$$
which implies that $e^I_\cdot \phi\in K_\a^-$. By Remark \ref{clbasis} $E_p = \span\{e^I\phi :\, I = \mbox{even}\}$  therefore $\bigcap_\a K_\a^+ = \langle\phi\rangle$ is at most one dimensional. The case with $\bigcap_\a \hat{K}_\a^+$ is analogous. 

Fix now a string $J$ and a nontrivial vector $\phi\in\Big(\bigcap_{\a\in J}K_\a^-\Big)\cap\Big(\bigcap_{\a\in J^c}K^+_\a\Big)$.
\begin{itemize}
\item $J$ is an even string if and only if 
$$
M_\a( e^J_\cdot\phi)\, =\, \begin{cases}e^J_\cdot M_\a\phi\, =\, e^J_\cdot \phi &\text{if}\,  \a\notin J\\ -e^J_\cdot M_\a\phi\,=\, e^J_\cdot \phi &\text{if}\,  \a\in J  \end{cases}
$$
for every $\a$ so that $\bigcap_\a K^+_\a = \langle e^J_\cdot\phi\rangle$. 
\item $J$ is an odd if and only if
$$
\hat{M}_\a( e^J_\cdot\phi)\, =\, \begin{cases}e^J_\cdot M_\a \phi\, =\, e^J_\cdot\phi &\text{if}\,  \a\notin J\\ -e^J_\cdot M_\a\phi\,=\, e^J_\cdot \phi &\text{if}\,  \a\in J  \end{cases}
$$
for every $\a$ so that $\bigcap_\a \hat{K}^+_\a = \langle e^J_\cdot \phi\rangle$.
\end{itemize}
This dichotomy shows also that either $\bigcap_\a K^+_\a$ or $\bigcap_\a \hat{K}^+_\a$ should be nontrivial at each zero of $\psi$. Say that $\a\sim \b$ iff $\a,\,\b\in J$ or $\a,\,\b\in J^c$. By Lemma \ref{14} if $\a\sim \b$ we can write $\phi = \begin{pmatrix}\pm e^\a_\cdot b_\cdot\psi_\a\\ b\end{pmatrix} = \begin{pmatrix}\pm e^\b_\cdot b_\cdot\psi_\b\\ b\end{pmatrix}$ for some common $b\in \Lambda^1X$ and if $\a\not\sim \b$ then $\phi = \begin{pmatrix}e^\a_\cdot b_\cdot\psi_\a\\ b\end{pmatrix} = \begin{pmatrix} - e^\b_\cdot b_\cdot\psi_\b\\ b\end{pmatrix}$ for the same $b\in \Lambda^1X$. In particular we have a description of the orthonormal basis $\{\psi_\a\}$ in terms of $\psi_1$ and $b$ as 
$$
\psi_\a\, =\,  \begin{cases}  b_\cdot e^\a_\cdot e^1_\cdot b_\cdot \psi_1 &\text{if}\, \a\sim 1 \\- b_\cdot e^\a_\cdot e^1_\cdot b_\cdot \psi_1  &\text{if}\, \a\not\sim 1\end{cases}.
$$
But $|J| + |J^c| =4$ hence $J,\, J^c$ are both even or both odd. Therefore $\{\psi_\a\}$ is positively oriented in $W^+_p$ for $J$ even and negatively oriented for $J$ odd.
\end{proof}

\begin{cor}
For the bundles $E$ and $F$ in \eqref{5.1A}, the index of $\D : \Gamma(E)\rightarrow \Gamma(F)$ is the second Chern number of the bundle $W^+$:
$$
\ind{\cal D}\, =\, c_2(W^+)[X].
$$

\end{cor}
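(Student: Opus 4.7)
The plan is to apply the Index Localization Theorem~\ref{Igor} to the family $\D + s\A_\psi$ and then recognize the resulting algebraic sum as a topological degree. The setup of Example~5 already arranges the hypotheses: $\psi\in\Gamma(W^+)$ is chosen transverse to the zero section, so by Corollary~\ref{4} the singular set $Z_{\A_\psi}$ coincides with the zero set $Z(\psi)$, which is a finite collection of isolated points $\{p_\ell\}$. The non-degeneracy condition \eqref{normalrates} holds for $\A_\psi$ at each such $p$, either by a direct computation using the identity ${\cal A}_\psi^*\circ{\cal A}_\psi=|\psi|^2\,\mathrm{Id}_E$ from Lemma~\ref{clifford} together with transversality, or after the orthonormalization of $\{\psi_\a\}$ noted just before Lemma~\ref{14}.

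Under these conditions Theorem~\ref{Igor} gives
\begin{equation*}
\ind\D \;=\; \sum_{p\in Z(\psi)}\bigl(\dim\K_p-\dim\hat\K_p\bigr).
\end{equation*}
Lemma~\ref{34} computes each summand: at a zero $p$ where $\nabla\psi|_p:T_pX\to W_p^+$ preserves orientation, one has $\dim\K_p=1$ and $\dim\hat\K_p=0$, so the local contribution is $+1$; where $\nabla\psi|_p$ reverses orientation, the contribution is $-1$. In both cases the local contribution equals the local degree $\deg_p(\psi)$ of $\psi$ as a transverse section of the oriented real rank-$4$ bundle $W^+$.

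Summing,
\begin{equation*}
\ind\D \;=\; \sum_{p\in Z(\psi)}\deg_p(\psi),
\end{equation*}
which is by definition the Euler number $e(W^+)[X]$ of the oriented real rank-$4$ bundle $W^+$. Since $W^+$ carries the structure of a complex rank-$2$ bundle, its Euler class equals its top Chern class, $e(W^+)=c_2(W^+)$, and therefore $\ind\D = c_2(W^+)[X]$.

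The only point requiring care is the sign bookkeeping in the previous step: one must check that the ``preserves/reverses orientation'' criterion of Lemma~\ref{34} really is the same as the local degree of $\psi$ with respect to the given orientations on $T_pX$ and $W_p^+$. This is where the specific orthonormal frame $\{\psi_\a\}$ constructed in the proof of Lemma~\ref{34}, and the parity of the string $J$ appearing there, are decisive; once that compatibility is recorded, the identification with $e(W^+)[X]$ is automatic and the corollary follows.
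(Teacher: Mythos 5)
Your proof is correct and follows essentially the same route as the paper: apply Theorem~\ref{Igor} with the local contributions $\pm 1$ computed in Lemma~\ref{34}, recognize the resulting signed count of zeros of $\psi$ as the Euler number $e(W^+)[X]$, and use $e(W^+)=c_2(W^+)$ for the complex rank-2 bundle $W^+$. Your version simply spells out the sign bookkeeping and the non-degeneracy hypothesis more explicitly than the paper's one-line argument does.
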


\begin{proof}
As a consequence of Theorem \ref{Igor}  and Lemma~\ref{34}, the index of $\D$ is  the signed count of the zeros of $\psi$, which is given by the Euler number $e(W^+)[X]= c_2(W^+)[X]$.
\end{proof}

Of course, the index of $\D$ can also be calculated directly from the Atiyah-Singer Index Theorem; the result can then be seen to agree with $c_2(W^+)[X]$.   However, it is interesting to observe  that the index is the Euler number of a single bundle, and that this form emerges naturally from the concentration principle.
 
\vspace{5mm}

\noindent{\bf Example 6: \it ($J$-holomorphic curves in symplectic four-manifolds.)}

Recall the philosophy of Diagram~\ref{Ldiagram}:  if we can find a sub-bundle $\L$ of $\Hom(E,F)$ whose sections satisfy the concentration condition, then we obtain concentrating operators $D_s$ with  singular sets $Z_\A$ of possibly different dimensions.   This example illustrates this phenomenon in dimension four, by showing how a sub-bundle $\L$ can be constructed from    a symplectic structure.  

Let $(X^4, \omega)$ a closed symplectic manifold with a complex hermitian line bundle $L$ and a section $\psi \in\Gamma(L)$ whose zero set  is a transverse disjoint union $Z_\psi = \cup_\ell Z_\ell$ of  symplectic submanifolds of $X$.  Let  $N_\ell$ be the  symplectic normal bundle of $Z_\ell$.  Choose an almost complex structure $J$ and a Riemannian metric $g$ on $X$ so that   $(\omega, J, g)$ is a compatible triple and each $Z_\ell$ is $J$-holomorphic;   the symplectic and metric normal bundles of $Z_\ell$ are the same.   

As usual, write $TX\otimes \mathbb{C} = T^{1,0}X\oplus T^{0,1}X$ and define the  canonical bundle to be the complex line bundle $K^X={\Lambda}^{2,0}X$.   In this context, the direct sum $W=W^+\oplus W^-$ of the complex rank 2 bundles
$$
W^+ =   L\oplus L\bar{K}, \qquad W^- = L\otimes {\Lambda}^{0,1}X
$$
has a $\Spin^c$ structure and a Clifford multiplication $T^*X\otimes  W\to W$  from wedging and contracting $(0,1)$ forms as in formula \eqref{interior/exteriormult}. 

Let $\nabla^L$ be a hermitan connection on $L$ and $\nabla^X$ the Levi-Civita connection on $X$. These can be used to build a $\Spin^c$ connection $\nabla = \nabla^L\oplus\nabla^{L\bar{K}}$ on $W^+$. There is also the projection to the $(0,1)$ part of $T^*X$ of $\nabla^L$ namely $\del_L \psi: = \frac{1}{2}(\nabla^L\psi + i\nabla^L\psi \circ J)$. Then a Dirac operator is defined by  
$$
D = \sqrt{2}(\del_L + \del_L^*) : L\oplus \bar{K}L \rightarrow {\Lambda}^{0,1}X\otimes L.
$$

We would like to study the perturbed operator $\D + s\A_\psi$. Fix one component  $Z=Z_\ell$ with normal bundle $N=N_\ell$. By the transversality of $\psi$,  the map 
$\nabla^L\psi : N \rightarrow L|_Z$ is an $\mathbb{R}$-linear isomorphism.

In order for the nondegeneracy condition \eqref{normalrates}  to hold we need the following:
\begin{lemma}
\label{lemma5.9}
We can change $\psi$ without changing its zero set so that $\nabla^L\psi : N\rightarrow L|_Z$ becomes orthogonal.
\end{lemma}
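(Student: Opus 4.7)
The plan is to use a pointwise polar decomposition of $\nabla^L\psi$ along $Z$ to isolate its positive symmetric part $P$, then deform $\psi$ in a tubular neighborhood of $Z$ by inserting $P^{-1}$ into its leading-order behavior, cutting off smoothly to leave $\psi$ unchanged far from $Z$.

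First, I would apply the polar decomposition. Since both $N$ and $L|_Z$ are rank-$2$ real vector bundles with compatible inner products and $\nabla^L\psi:N\to L|_Z$ is a fiberwise $\R$-linear isomorphism by transversality of $\psi$, it factors smoothly along $Z$ as
$$\nabla^L\psi\ =\ U\circ P,$$
with $P=\sqrt{(\nabla^L\psi)^*(\nabla^L\psi)}:N\to N$ positive-definite self-adjoint and $U=(\nabla^L\psi)\,P^{-1}:N\to L|_Z$ orthogonal. Smoothness of $P$ follows from smoothness of the square root on the open cone of positive-definite operators.

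Next, I would modify $\psi$ on a tubular neighborhood of $Z$. Identify such a neighborhood with a disk bundle $D_\epsilon N$ via the normal exponential map, and trivialize $L$ there by parallel transport of $\nabla^L$ along normal geodesics. Since $\psi$ vanishes on $\{v=0\}$, Taylor's theorem in the fiber coordinate gives a smooth factorization
$$\psi(z,v)\ =\ M(z,v)\,v,\qquad M(z,0)=(\nabla^L\psi)_z=U_zP_z.$$
I would then pick a cutoff $\chi\in C^\infty([0,\infty))$ with $\chi\equiv 1$ on $[0,\epsilon_1]$ and $\chi\equiv 0$ on $[\epsilon_2,\infty)$ for some $0<\epsilon_1<\epsilon_2<\epsilon$, and define
$$\tilde\psi(z,v)\ :=\ M(z,v)\,Q(z,v)\,v,\qquad Q(z,v)\ :=\ \chi(|v|)\,P_z^{-1}+(1-\chi(|v|))\,I_{N_z}$$
on the tubular neighborhood, and $\tilde\psi:=\psi$ outside. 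The two definitions agree on the overlap, where $\chi=0$ and $Q=I$, so $\tilde\psi$ is a smooth global section.

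For $|v|\le\epsilon_1$, where $\chi\equiv 1$, this gives $\tilde\psi(z,v)=M(z,v)P_z^{-1}v$, so $\tilde\psi|_Z=0$ and the normal linearization is $\nabla^L\tilde\psi|_Z=M(z,0)\,P_z^{-1}=U_z$, which is orthogonal. The main obstacle is to check that the cutoff interpolation does not introduce new zeros on the annulus $\{\epsilon_1<|v|<\epsilon_2\}$. This is handled by observing that $Q(z,v)$ is a convex combination of the positive-definite operators $P_z^{-1}$ and $I_{N_z}$, hence positive-definite and invertible; and after shrinking $\epsilon$, $M(z,v)$ stays close to the invertible $M(z,0)=U_zP_z$ and remains an isomorphism. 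Hence $\tilde\psi(z,v)=M(z,v)Q(z,v)v$ vanishes only when $v=0$, so $Z(\tilde\psi)=Z$ as required.
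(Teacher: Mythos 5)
Your proof is correct, and it implements the same underlying idea as the paper (make the normal $1$-jet orthogonal near $Z$ via polar decomposition, interpolate with a cutoff, and check no zeros are created), but the mechanism is genuinely different. The paper works additively: it uses that the orthogonal maps $O(N,L|_Z)$ are a deformation retract of the invertible homomorphisms to get a path $H_t$ from $\nabla^L\psi$ to an orthogonal map, sets $\Psi(v)=\psi(v)+\tau\big(H_{\rho(v)}(v)-\nabla^L_v\psi|_Z\big)$, and excludes new zeros by a quantitative comparison between the uniform lower bound $\inf_{t,|v|=1}|H_t(v)|>C$ and the $O(|v|^2)$ Taylor remainder, which forces the tube radius $R<C/2C_1$. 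You instead work multiplicatively: after the Hadamard factorization $\psi(z,v)=M(z,v)v$ you insert $Q=\chi P_z^{-1}+(1-\chi)I$, and new zeros are excluded for free because the interpolation stays inside the cone of positive-definite operators (so $Q$ is invertible) while $M$ is invertible near $Z$ by continuity; no norm estimates are needed. Your route buys a cleaner zero-set argument and an explicit orthogonal limit $U_z$ from the polar decomposition, at the cost of the factorization step and the parallel-transport identification needed to make $M(z,v)$ a map into the fixed fiber $L_z$; the paper's route avoids the factorization and, being path-based, works verbatim for any path of isomorphisms ending at an orthogonal map, but needs the explicit constants. One small ordering point in your write-up: the radius $\epsilon$ should be shrunk (so that $M(z,v)$ is invertible on all of $D_\epsilon N$ and the tube misses the other components $Z_{\ell'}$) \emph{before} fixing the cutoff radii $\epsilon_1<\epsilon_2<\epsilon$; as written you choose the cutoff first and shrink afterwards, which is harmless but should be rearranged for precision.
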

\begin{proof}
We consider the bundles $N$ with the induced metric and $L|_Z$ as a real vector bundle with the induced metric $h$ from the hermitian metric. Let $O(N, L|_Z) = \{ H\in \Hom(N, L|_Z) | H^*h = g\}$, a deformation retraction of $\Hom(N , L|_Z)$. Therefore there is a smooth path of bundle maps $[0,1]\ni t\rightarrow H_t\in \Hom(N,\,L|_Z)$ so that $H_0 = \nabla^L\psi$ and $H_1 \in O(N, L|_Z)$. This path can be chosen so that $H_t$ is invertible for every $t\in [0,1]$. As a consequence there exist constant $C>0$ such that 
$$
\inf_{(t, v)\in [0,1]\times S}\Big|H_t(v)\Big|\, >\, C
$$
where $S$ is the unit sphere bundle of the normal bundle $N$. 

Now use the exponential map on the normal bundle $N$ of $Z$ to define a tubular neigbroorhood $\N$, a parallel transport map $\tau : L|_Z \rightarrow L|_\N$ along normal geodesics and set $x = \exp(v)$. Let $B(Z,R)\subset \N$ and $\rho$ be a smooth cutoff function with $\text{supp}(\rho)\subset B(Z,2R)$ and $\rho|_{B(Z,R)} \equiv1$. We redefine $\psi$ in $B(Z,2R)$ as
$$
\Psi(v): =\,\psi(v)\,+\, \tau(H_{\rho(v)}(v)\,-\, \nabla^L_v\psi|_Z).
$$
Note that
$$
\Big|\psi(v) - \, \tau\nabla^L_v\psi|_Z\Big| = O(|v|^2)\leq C_1 |v|^2
$$ 
for $v\in \N$. Clearly $\Psi$ has also transverse intersection with the zero section at $Z$ and satisfies the conclusion of the lemma at $Z$. Choosing $R<\frac{C}{2C_1}$ we get that for $v\in B(Z,2R)\backslash Z$
$$ 
\frac{|O(|v|^2)|}{|v|}\,\leq\, 2C_1R\,<\, C\, <\, \left|\tau H_{\rho(v)}\left(\frac{v}{|v|}\right)\right|
$$
therefore there are no other zeros of $\Psi$ in $B(Z,2R)$ except at $Z$. Repeating this proccess for each component of the singular set of the original $\psi$ we are done.
\end{proof}

Fix now $p\in Z$ and local coordinates $\{x_i\}$ in $X$ so that $Z = \{x_1 = x_2 =0\}$ and orthonormal frames $\{e_1,e_2 = J(e_1)\}$ and $\{e_3,e_4 = J(e_3)\}$ trivializing $N$ and $TZ$ respectively around $p$. By Lemma \ref{lemma5.9}, the set $\{\psi_i = \nabla^L_{e_i}\psi\}_{i=1,2}$ is an orthonormal frame trivializing $L|_Z$ around $p$ and we extend it to the normal directions to a frame trivializing $L$. Then $\psi$ expands in the normal directions of $Z$ as 
$$ 
\psi = x_1\psi_1 + x_2\psi_2 + O(|x|^2).
$$

Denote $(\nabla_{e_i}\A)\psi = \A_{\nabla^L_i\psi} = A_i$. 

We now have to consider the matrices $M_\a = -e^\a_\cdot A_\a, \,\a =1,2$ and their common positive spectrum.
As in Lemma \ref{14}, the positive eigenspaces are given by
$$
K^+_\a =  \span\Bigg\{\begin{pmatrix}e^\a_\cdot  b_\cdot \psi_\a\\ b\end{pmatrix}:\, b\in \Lambda^1X\Bigg\}, \  \a=1,2.
$$
There are two cases: 

\noindent $\bullet$\ \  The map $\nabla^L\psi : N\rightarrow L|_Z$ preserves the natural orientations as an $\mathbb{R}$ linear map. Then 
$$
e_2 = J(e_1)\quad \mbox{and}\quad \nabla^L_{J(e_1)}\psi = \psi_2 = i\psi_1 = i\nabla^L_{e_1}\psi
$$
so that $ e^1_\cdot\psi_1 + e^2_\cdot \psi_2 = D\psi|_Z = \del_L\psi|_Z = 0$. Then
\begin{eqnarray}
\K = K^+_1 \cap K^+_2 = \span\Bigg\{\begin{pmatrix} \psi_1\\ e^1\end{pmatrix}, \begin{pmatrix} \psi_2\\ e^2\end{pmatrix}\Bigg\} = \Bigg\{\begin{pmatrix}\nabla^L_{v^\sharp}\psi\\ v\end{pmatrix}:\, v\in N^*\Bigg\} \simeq N^*.
\end{eqnarray}
Also
$$
\hat{\K} = \span\{ e^3_\cdot \K, e^4_\cdot \K\} \simeq \ \mbox{orthogonal complement of $\omega$ in} \,{\Lambda}^{2,+}X \simeq K^X|_Z \simeq T^*Z\otimes N^*.
$$
Hence a local operator can be defined using the tangent frame $\{e_1,e_2\}$ as 
$$
D^Z = e^1_\cdot \nabla^L_{e_1} + e^2_\cdot \nabla^L_{e_2} =  \bar{\partial}_{N^*} : \Gamma(N^*)\rightarrow \Gamma(T^*Z_i\otimes N^*) 
$$
and by Riemann-Roch 
$$
\ind D^Z = 2 N^2 - 2(g - 1) = 2(L|_Z)^2 - 2(g-1)\quad \mbox{where}\ g =\ \mbox{genus of}\ Z.
$$
Also since $\nabla^L\psi|_Z : N\rightarrow L|_Z$ preserves orientation, the adjunction formula applies to give 
$$
2(g- 1) = (L|_Z)^2 +  L|_Z K. 
$$
Hence $ \ind D^Z  =  (L|_Z)^2 - K|_Z\cdot L|_Z$ in this case.

\begin{figure}
\vspace{.2cm}
\begin{center}
\psfrag{X}{$X$}
\psfrag{A}{$Z_1$}
\psfrag{B}{$Z_2$}
\psfrag{C}{$Z_3$}
\includegraphics{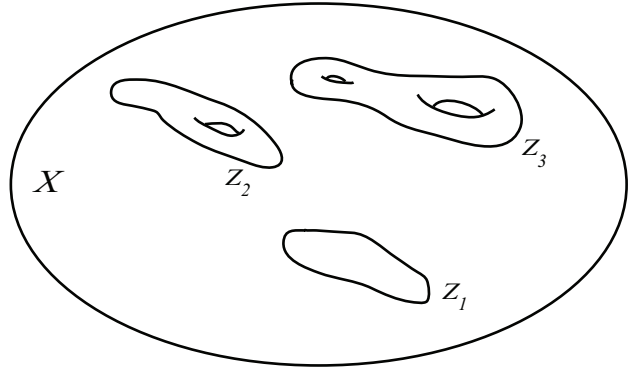}
\end{center}
\caption{The zero set of $\psi$}
\end{figure}
\vspace{.2cm}

\noindent $\bullet$\ \  If $\nabla^L\psi|_Z$ reverses orientation then adjunction formula gives
$$
2(g - 1) = (\bar{L}|_Z)^2 + K\bar{L}_Z = (L|_Z)^2 - K\cdot L|_Z
$$ 
and a similar calculation computes the local operator in this case $ D^Z = \bar{\partial}^* : T^*Z \rightarrow \mathbb{C}$.  By Riemann-Roch we then have
$$
\ind D^Z = 2(g - 1)= (L|Z)^2 - K\cdot L|_Z.
$$
Summing the contributions of the local indices from all the components $Z_\ell$ with $L_\ell = L|_{Z_\ell}$ we get 
$$
\ind D = c_2(W^+)[X] =  L^2 - KL = \sum_\ell( L_\ell^2 - K\cdot L_\ell) = \sum_\ell \ind D_\ell.
$$
This is a familiar formula in SW theory. It describes the dimension of the SW moduli space in terms of the bundles $K$ and $L$. It also suggest that the localization formula in Theorem \ref{Igor} can be generalized to calculate the index of $D$ by summing up contributions from indexes of Dirac operators defined from the local information of the bundles $E$ and $F$ and the first order jets of  $\A$  along the  components $Z_\ell$ of the singular set $Z_\A$. 
\medskip
   
\vspace{1cm}
 %%%%%%%%%%%%%%%%%%%%%%%%%%%%%%%%%%%%%%%%%%%%%%%%%%%%%%%%%%%%
%%%%%%%%%%%%%%%%%  Section6  %%%%%%%%%%%%%%%%%%%%%%%%%%%%%%
%%%%%%%%%%%%%%%%%%%%%%%%%%%%%%%%%%%%%%%%%%%%%%%%%%%%%%%%%%%%%%  

\section{Twisted Examples} 
\label{sec6}
\bigskip

Examples 7 and 8 below are twisted versions of Examples~3 and 5. We use an $SU(2)$ bundle to twist the spinor bundles and the associated Dirac operators. Similar constructions can be realized for any  compact Lie group. 
\vspace{10mm}

\noindent{\bf Example 7: \it Spinor-form pairs twisted by $SU(2)$-bundles.}
\bigskip

Let $(X, g , W^{\pm}, c)$ be an even-dimensional closed  Riemannian manifold with a $\Spin^c$ structure,  and let  $(E, h)$ be a Hermitian $SU(2)$-bundle on $X$. Set also $\mathfrak{su}(E): = \{A\in \End_\mathbb{C}(E) : A + A^* = 0,\, \tr_\mathbb{C}A=0\}$ where $A^*$ is the Hermitian adjoint of $A$. Differences of Hermitian connections on $E$ are sections of ${\Lambda}^1X\otimes \mathfrak{su}(E)$. Equip $E$ with a Hermitian connection $\nabla_E$ and $W$ with a $\Spin^c$ connection;  these induce connections $\nabla^{W\otimes E}$ on $W\otimes E$ and $\nabla$ on $\Lambda^* X\otimes \mathfrak{su}(E)$. The symbol maps $c$ and $\hat{c}$ extend as 
$$
c(v)\otimes Id_E : W^+\otimes E \rightarrow W^-\otimes E \quad\mbox{and}\quad \hat{c}(v)\otimes Id_{\mathfrak{su}(E)} : {\Lambda}^{odd}X\otimes\mathfrak{su}(E)\rightarrow {\Lambda}^{ev}X\otimes \mathfrak{su}(E).
$$
Finally we get operators
$$
D_E = (c\otimes id_E)\circ \nabla^{W\otimes E} \quad \mbox{and}\quad d_E + d_E^* = (\hat{c}\otimes id_{\mathfrak{su}(E)})\circ \nabla.
$$

We define a Clifford multiplication $c_E$ to include $\End(E)$-valued forms by 
\begin{eqnarray}
\label{9}
c_E : {\Lambda}^*(X)\otimes \End(E)&\rightarrow & \End(W\otimes E)\\ \nonumber \eta\otimes A&\mapsto& \eta_\cdot \otimes A
\end{eqnarray}

\noindent The restriction of $c_E$ to the subspace ${\Lambda}^*(X)\otimes \mathfrak{su}(E)$ defines maps  
$$
P^{ev}: \ W^+\otimes E \rightarrow {\Hom}_{\mathbb{C}}\Big({\Lambda}^{ev}(X)\otimes \mathfrak{su}(E),\,W^+\otimes E\Big) 
$$
and
$$
P^{odd}:\  W^+\otimes E \rightarrow {\Hom}_{\mathbb{C}}\Big({\Lambda}^{odd}(X)\otimes \mathfrak{su}(E),\,W^-\otimes E\Big) 
$$
both given by $\psi\otimes e \mapsto  c_E(\cdot)\psi\otimes e$.

\begin{prop}
 For fixed $\Psi \in \Gamma(W^+\otimes E)$,  the perturbed operator
$$
{\cal D}_s= {\cal D} + s\,{\cal A}_{\Psi}:\Gamma(W^+\otimes E)\oplus\Omega^{odd}(X,\mathfrak{su}(E))\rightarrow\Gamma(W^-\otimes E)\oplus\Omega^{ev} (X,\mathfrak{su}(E))
$$
with
\begin{eqnarray*}
{\cal D}\ =\ \begin{pmatrix}D_E&0\\0&d_E+ d_E^*\end{pmatrix}
\qquad\mbox{and}\qquad
{\cal A}_{\Psi}\, = \, \begin{pmatrix}0&P^{odd}_{\Psi}\\- P^{ev*}_{\Psi}&0\end{pmatrix}
\end{eqnarray*}
satisfies the concentration relation (\ref{cond}). Here $P^{ev*}_{\Psi}$ denotes the adjoint of $ P_{\Psi}^{ev}$. 
\end{prop}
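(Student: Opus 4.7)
The plan is to mirror the proof of Lemma~\ref{1.3lemma}, exploiting the fact that the twisting by the $SU(2)$-bundle $E$ is compatible with Clifford multiplication because the latter acts only on the spinor/form factor. First I would write the symbol of $\mathcal{D}$ as the block operator
\[
\sigma_{\mathcal{D}}(v) \;=\; \begin{pmatrix} c(v)\otimes Id_E & 0 \\ 0 & \hat{c}(v)\otimes Id_{\mathfrak{su}(E)} \end{pmatrix},
\]
and recall that $c(v)^*=-c(v)$ and $\hat{c}(v)^*=-\hat{c}(v)$. Expanding both sides of \eqref{cond} block by block, the concentration condition $\mathcal{A}_\Psi^{*}\circ \sigma_{\mathcal{D}}(v)=\sigma_{\mathcal{D}}(v)^{*}\circ \mathcal{A}_\Psi$ reduces to the single identity
\[
P^{ev}_\Psi\circ (\hat{c}(v)\otimes Id_{\mathfrak{su}(E)})\;=\;(c(v)\otimes Id_E)\circ P^{odd}_\Psi
\]
as maps $\Lambda^{odd}X\otimes \mathfrak{su}(E)\to W^+\otimes E$, together with its formal adjoint (which is automatic once the first is established).

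To verify this identity I would argue pointwise and reduce to simple tensors. For $\Psi=\psi\otimes e$ with $\psi\in W^+$, $e\in E$, and for $\eta\otimes A\in \Lambda^{odd}X\otimes \mathfrak{su}(E)$, the definition of $P^{ev}_\Psi$ and $P^{odd}_\Psi$ through the twisted Clifford multiplication $c_E$ in \eqref{9} gives
\[
P^{ev}_\Psi\bigl((\hat{c}(v)\eta)\otimes A\bigr)\;=\;(\hat{c}(v)\eta)_{\cdot}\psi \;\otimes\; Ae,
\]
while
\[
(c(v)\otimes Id_E)\circ P^{odd}_\Psi(\eta\otimes A)\;=\;v.\eta.\psi \;\otimes\; Ae.
\]
These two expressions agree by the untwisted intertwining relation $v.b.\psi=(\hat{c}(v)b).\psi$ of Lemma~\ref{3.Lemma1}. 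Extending bilinearly over a local frame of $E$ and of $\mathfrak{su}(E)$, and by linearity in $\eta$, the identity holds on the full twisted space and the concentration condition follows.

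The argument is entirely pointwise and algebraic, so I expect no serious obstacle. The only care needed is in the block-matrix bookkeeping and keeping track of parity (which $P^{ev}_\Psi$ vs.\ $P^{odd}_\Psi$ maps where, and how $\hat{c}(v)$ swaps the even/odd grading of $\Lambda^{*}X$). Once this is in place, the key observation is that $c_E$ leaves the $\mathfrak{su}(E)$-factor untouched, so the twisted verification reduces to the untwisted spinor/form identity already established in Section~\ref{sec4}.
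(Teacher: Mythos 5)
Your proposal is correct and follows essentially the same route as the paper's proof: reduce to simple tensors $\Psi=\psi\otimes e$ and $\eta\otimes A$, note that the block structure of $\sigma_{\mathcal D}$ and $\mathcal A_\Psi$ collapses the concentration condition to the single intertwining identity $P^{ev}_{\Psi}\circ(\hat c(v)\otimes Id_{\mathfrak{su}(E)})=(c(v)\otimes Id_E)\circ P^{odd}_{\Psi}$, and verify it using the relation $v.b.\psi=(\hat c(v)b).\psi$ of Lemma~\ref{3.Lemma1}, since $c_E$ leaves the $\mathfrak{su}(E)$-factor untouched.
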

\begin{proof}
By linearity, it suffices to show (\ref{cond}) for $\Psi$ of the form $\psi\otimes e$. The symbol of ${\cal D}$, applied to a covector $v$, and its adjoint are given by
$$
{\sigma_{\cal D}(v)}\, =\, \begin{pmatrix}c(v)\otimes Id_E&0\\0& \hat{c}(v)\otimes Id_{\mathfrak{su}(E)}\end{pmatrix} 
$$
and
$${ \sigma_{\cal D}(v)^*}\ =\ \begin{pmatrix}-c(v)\otimes Id_E&0 \\0& -\hat{c}(v)\otimes Id_{\mathfrak{su}(E)}\end{pmatrix}.
$$
Checking the concentration relation amounts to proving the identity 
$$
P_{\psi\otimes e}^{ev}\circ (\hat{c}(v)\otimes Id_{\mathfrak{su}(E)})\, =\, (c(v)\otimes Id_E)\circ P_{\psi\otimes e}^{odd}.
$$
By linearity, it is enough to check that the identity holds when applied to $b\otimes B \in {\Lambda}^{odd}X\otimes \mathfrak{su}(E)$. Then 
$$
(c(v)\otimes Id_E)\circ P_{\psi\otimes e}^{odd}(b\otimes B) = (c(v)\otimes Id_E) (c(b)\psi \otimes B(e)) =  (c(v)c(b) \psi)\otimes B(e)
$$
and
\begin{eqnarray*}
P_{\psi\otimes e}^{ev}\circ (\hat{c}(v)\otimes Id_{\mathfrak{su}(E)})(b\otimes B) &=& P_{\psi\otimes e}^{ev}(\hat{c}(v)b\otimes B) = (c(\hat{c}(v)b)\psi) \otimes B(e)\\ &=& (c(v)c(b) \psi)\otimes B(e),
\end{eqnarray*}
where the last equality holds by $(\ref{clhat})$.
\end{proof}

\vspace{5mm}

We conclude with a generalization of Witten's  perturbation of $d+d^*$ described in Example~3.\\

\noindent{\bf Example 8: \it Witten's deformations with values in bundles.}
\bigskip

Let $(X, g)$ closed even-dimensional Riemannian.  Fix  a Hermitian bundle  $(E, h)$ on $X$ with  a Hermitian connection $\nabla_E$, and let
$$
\Sym(E): = \{A\in \End_\mathbb{C}(E) : A = A^* \}
$$
be the subspace of self-adjoint endomorphisms with the inner product $\langle A, B\rangle = \tr(AB)$.  
Recall the Clifford representations $\hat{c}$ and $\tilde{c}$ on $\Lambda^*(X)$ described in  equation (\ref{hatcl}) of Example~3.  Then $\hat{c}$ extend  to the map
$$
\sigma(v) = \hat{c}(v)\otimes Id_{\End(E)} : \Lambda^{odd}(X)\otimes\End(E)\, \rightarrow\, \Lambda^{ev}(X)\otimes \End(E).
$$
For $\Psi=\a\otimes A\in \Lambda^1(X)\otimes \Sym(E)$, define
$$
\begin{array}{cccc}
\A_{\a\otimes A}: &\Lambda^{odd}(X)\otimes\End(E) & \rightarrow & \Lambda^{ev}(X)\otimes \End(E)\\ 
 & \b\otimes B& \mapsto & \tilde{c}(\a)\b \otimes A\circ B
\end{array}
$$
By linearity, we can extend this to a map $\A_\Psi$ for every $\Psi \in  \Lambda^1(X)\otimes \Sym(E)$. 
Finally, the induced connection $\nabla$ on $\Lambda^*(X)\otimes \End(E)$ gives an operator
$$
D_E = \sigma\circ \nabla: \Gamma(\Lambda^{odd}(X)\otimes\End(E))\, \rightarrow\, \Gamma(\Lambda^{ev}(X)\otimes\End(E)).
$$
\begin{prop}
For fixed $\Psi \in  \Gamma(\Lambda^1(X)\otimes \Sym(E))$,  the perturbed operator
$$
D_s= D_E + s\,{\cal A}_\Psi
$$
satisfies the concentration condition (\ref{cond}).
\end{prop}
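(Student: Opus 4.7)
The plan is to reduce to a simple tensor $\Psi=\alpha\otimes A$ by bilinearity, identify the formal adjoint $\A^*_{\alpha\otimes A}$, and then verify the concentration identity \eqref{cond} by a direct computation that uses the two key facts from Example~3: the self-adjointness $\tilde c(\alpha)^*=\tilde c(\alpha)$, the skew-adjointness $\hat c(u)^*=-\hat c(u)$ from \eqref{eq:adjoints}, and most importantly the anti-commutation \eqref{twocliff}, namely $\hat c(u)\tilde c(\alpha)=-\tilde c(\alpha)\hat c(u)$ for all $u\in T^*X$. Since $\sigma(u)=\hat c(u)\otimes \text{Id}_{\End(E)}$ and $\A_{\alpha\otimes A}$ acts on $\End(E)$ purely by left multiplication by $A$, the two tensor factors decouple, so it is enough to verify compatibility separately on the form factor (where \eqref{twocliff} does the work) and on the $\End(E)$ factor (where self-adjointness of $A$ is used).

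For the adjoint, I would first spell out that with respect to the trace inner product $\langle C,D\rangle=\Re\tr(CD^*)$ on $\End(E)$, the adjoint of the left-multiplication operator $L_A:B\mapsto A\circ B$ is $L_{A^*}$. Because $A\in\Sym(E)$, we have $A^*=A$, so $L_A$ is self-adjoint; combined with $\tilde c(\alpha)^*=\tilde c(\alpha)$ this gives
\[
\A^*_{\alpha\otimes A}(\beta'\otimes B')\ =\ \tilde c(\alpha)\beta'\otimes A\circ B',\qquad \beta'\otimes B'\in \Lambda^{ev}(X)\otimes\End(E).
\]
This is the one place where the hypothesis $A\in\Sym(E)$ (rather than arbitrary $\End(E)$) is genuinely needed, so I would highlight it.

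Now I would simply compute both sides of \eqref{cond} on a simple tensor $\beta\otimes B\in\Lambda^{odd}(X)\otimes\End(E)$ and match them. The left-hand side gives
\[
\A^*_{\alpha\otimes A}\,\sigma(u)(\beta\otimes B)\ =\ \A^*_{\alpha\otimes A}(\hat c(u)\beta\otimes B)\ =\ \tilde c(\alpha)\hat c(u)\beta\otimes A\circ B,
\]
while the right-hand side gives
\[
\sigma(u)^*\,\A_{\alpha\otimes A}(\beta\otimes B)\ =\ -\hat c(u)\big(\tilde c(\alpha)\beta\big)\otimes A\circ B\ =\ \tilde c(\alpha)\hat c(u)\beta\otimes A\circ B,
\]
the last step being \eqref{twocliff}. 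These agree, and bilinearity in $\Psi$ extends the identity to all sections of $\Lambda^1(X)\otimes\Sym(E)$.

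Since every piece of the argument is a direct algebraic check with no analytic subtlety, I do not anticipate a real obstacle. The only point that deserves care is the adjoint computation in $\End(E)$: if $A$ were merely in $\End(E)$ (say skew-Hermitian as in Example~7's $\mathfrak{su}(E)$), the self-adjointness of $L_A$ would fail and an extra sign would appear, showing why the proposition is stated for $\Sym(E)$ rather than arbitrary $\End(E)$.
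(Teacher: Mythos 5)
Your proof is correct and follows essentially the same route as the paper's: reduce by linearity to a simple tensor $\Psi=\alpha\otimes A$, apply both sides of \eqref{cond} to $\beta\otimes B\in\Lambda^{odd}(X)\otimes\End(E)$, and match them using $A^*=A$ together with the relations \eqref{eq:adjoints} and \eqref{twocliff}. Your explicit identification of $\A^*_{\alpha\otimes A}$ via the adjoint of left multiplication in the trace inner product is a point the paper uses implicitly, so spelling it out is a welcome (but not substantively different) addition.
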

\begin{proof}
Again, it is enough to verify (\ref{cond}) for $\Psi = \b\otimes B \in {\Lambda}^{odd}X\otimes \End(E)$. Note that 
\begin{eqnarray*}
(\hat{c}(v)^*\otimes Id_{\End(E)})\circ {\cal A}_{\a\otimes A} (\b\otimes B) &=& (\hat{c}(v)^*\otimes Id_{\End(E)} (\tilde{c}(\a)\b \otimes A B)\\ &=&  \hat{c}(v)^*\tilde{c}(\a) \b\otimes A B 
\end{eqnarray*}
and
\begin{eqnarray*}
\A^*_{\a\otimes A}\circ (\hat{c}(v)\otimes Id_{\End(E)})(\b\otimes B) &=& \A^*_{\a\otimes A}(\hat{c}(v)\b\otimes B)\\ &=& \tilde{c}(\a)^*\hat{c}(v)\b\otimes A^* B.  
\end{eqnarray*}
These are equal because $A^* = A$ and $(\ref{twocliff})$ holds.
\end{proof}

 \vspace{1cm}
%%%%%%%%%%%%%%%%%%%%%%%%%%%%%%%%%%%%%%%%%%%%%%%%%%%%%%%%%%%%
%%%%%%%%%%%%%%%%%  References  %%%%%%%%%%%%%%%%%%%%%%%%%%%%%%
%%%%%%%%%%%%%%%%%%%%%%%%%%%%%%%%%%%%%%%%%%%%%%%%%%%%%%%%%%%%%%

\end{document}